\def\@rmrk#1#2{\refstepcounter
    {#1}\@ifnextchar[{\@yrmrk{#1}{#2}}{\@xrmrk{#1}{#2}}}
\makeatletter\@addtoreset{equation}{section}\makeatother
 \newfont{\bfit}{cmbxti10 scaled 1200}
\renewcommand{\d}{{\rm d}}
 \newcommand{\e}{{\rm e} }
 \newcommand{\eps}{\varepsilon}
 \newcommand{\R}{\mathbb{R}}
 \newcommand{\N}{\mathbb{N}}
 \newcommand{\Z}{\mathbb{Z}}
 \newcommand{\E}{\mathbb{E}}
 \renewcommand{\P}{\mathbb{P}}
 \newcommand{\Q}{\mathbb{Q}}
 \def\1{{\mathchoice {1\mskip-4mu\mathrm l} 
{1\mskip-4mu\mathrm l}
{1\mskip-4.5mu\mathrm l} {1\mskip-5mu\mathrm l}}}
\renewcommand{\subsection}{\secdef \subsct\sbsect}
\newcommand{\subsct}[2][default]{\refstepcounter{subsection}
\vspace{0.15cm}
{\flushleft\bf \arabic{section}.\arabic{subsection}~\bf #1  }
\nopagebreak\nopagebreak}
\newcommand{\sbsect}[1]{\vspace{0.1cm}\noindent
{\bf #1}\vspace{0.1cm}}
\newtheorem{theorem}{Theorem}[section]
\newtheorem{lemma}[theorem]{Lemma}
\newtheorem{cor}[theorem]{Corollary}
\newtheorem{prop}[theorem]{Proposition}
\newtheoremstyle{thm}{1.5ex}{1.5ex}{\itshape\rmfamily}{}
{\bfseries\rmfamily}{}{2ex}{}
\newtheoremstyle{rem}{1.3ex}{1.3ex}{\itshape\rmfamily}{}
{\itshape\rmfamily}{}{1.5ex}{}
\theoremstyle{rem}
\newtheorem{remark}{{\slshape\rmfamily Remark}}[]
\def\thebibliography#1{\section*{References}
  \list%
  {\arabic{enumi}.}
    {\settowidth\labelwidth{[#1]}\leftmargin\labelwidth
    \advance\leftmargin\labelsep
    \parsep0pt\itemsep0pt
    \usecounter{enumi}}
    \def\newblock{\hskip .11em plus .33em minus .07em}
    \sloppy                   
    \sfcode`\.=1000\relax}
\begin{document}
 \title[Weak and strong disorder for the SHE in $d\geq 3$]{Weak and strong disorder for the stochastic heat equation and 
	continuous directed
polymers in $d\geq 3$}
\author{Chiranjib Mukherjee}
\address{Courant Institute, NYU and WIAS Berlin} 
\email{mukherjee@cims.nyu.edu}
\author{Alexander Shamov}
\address{Weizmann Institute of Science.}
\email{trefoils@gmail.com}
\author{Ofer Zeitouni}
\address{Weizmann Institute of Science and Courant Institute.}
\thanks{Partially supported by a grant from the Israel Science Foundation and by a US-Israel BSF grant}
\email{ofer.zeitouni@weizmann.ac.il}
\subjclass{60J65, 60J55, 60F10.}
\keywords{Directed polymer in continuum, Stochastic heat equation, 
Kardar-Parisi-Zhang equation.}
\date{January 7, 2015}
\maketitle

\centerline{\textit{Courant Institute  and WIAS Berlin, Weizmann Institute, Weizmann Institute and Courant Institute }}




\begin{abstract}
We consider the smoothed
multiplicative noise stochastic heat equation 
$$\d u_{\eps,t}= 
\frac 12 \Delta u_{\eps,t} \d t+ 
\beta \eps^{\frac{d-2}{2}}\, \, u_{\eps, t} \, \d B_{\eps,t} ,
\;\;u_{\eps,0}=1,$$
in dimension
$d\geq 3$, where $B_{\eps,t}$ is a spatially smoothed (at scale $\eps$)
space-time white noise, and $\beta>0$ is a parameter.
We show the existence of a $\bar\beta\in (0,\infty)$ so 
that the solution exhibits weak disorder when $\beta<\bar\beta$ and
strong disorder when $\beta > \bar\beta$. The proof techniques use elements of the theory of the Gaussian multiplicative chaos.
\end{abstract}
\section{Motivation and introduction}\label{sec-intro}

We consider the stochastic heat equation (SHE) with multiplicative noise,
written formally as
\begin{equation}\label{stheateq}
 \partial_t  u(t,x)= \frac 12 \Delta  u(t,x) +  u(t,x) \, \eta (t,x). 
\end{equation}
Here $\eta$ is the ``space-time white noise", which formally is
the centered Gaussian process with
covariance function 
$\E(\eta(s,x) \eta(t,y))= \delta_0(t-s)\delta_0(x-y)$ for $s,t>0$ and 
$x,y \in \R^d$. We emphasize that \eqref{stheateq} is a formal expression,
and in attempting to give it a precise meaning one is immediately faced
with the problem of multiplication of distributions.


Besides the intrinsic interest in the SHE, we recall that
the Cole-Hopf transformation $h:= -\log u$ formally transforms the SHE 
to the non-linear Kardar-Parisi-Zhang (KPZ) equation, 
which 
can be written as
\begin{equation}\label{KPZ}
\partial_t h(t,x)= \frac 12 \Delta h(t,x)- 
\frac 1 2 (\partial_ x h(t,x))^2 + \eta,
\end{equation}
and appears in dimension 
$d=1$ as the limit of front propagation in certain exclusion processes
(\cite{BG97}, \cite{ACQ11}). While a-priori the equation
\eqref{KPZ} is not well posed due to the presence of products of distributions,
much recent progress has been achieved in giving  an intrinsic precise interpretation to it in dimension $d=1$
(\cite{H13})

As discussed in \cite{AKQ} and 
\cite{CSZ15},
the equations \eqref{stheateq} and \eqref{KPZ} share 
close analogies to the well-studied 
{\it{discrete directed polymer}}, which can 
be defined as the transformed path measure
\begin{equation}\label{discretepolym}
\mu_n(\d \omega)= \frac 1{Z_n} \exp\bigg\{\beta \sum_{i=1}^n \eta(i,\omega_i)\bigg\} \d P_0.
\end{equation}
Here the white noise (the {\it{disorder}}) is replaced by i.i.d. random variables $\eta=\{\eta(n,x)\colon n\in\N, x\in \Z^d\}$, $P_0$ denotes the law of a simple random walk starting at the origin
corresponding to a $d$-dimensional path $\omega_n=(\omega_i)_{i\leq n}$, while $\beta>0$ stands for the strength of the disorder. 
It is well-known that, when $d\geq 3$  the
normalized partition function
$Z_n / \E Z_n$ converges almost surely to a 
random variable $Z_\infty$, which, when $\beta$ is small enough, is positive almost surely (i.e., {\it{weak disorder}} persists \cite{IS88, B89}), \
while for $\beta$ large enough, $Z_\infty=0$ (i.e.,
{\it{strong disorder holds}} \cite{CSY04}). 
  Related results
for a continuous directed polymer in a field
of random traps appear in \cite{CY13}.

We return
to the study of the stochastic heat equation in the continuum $\R^d$, written as a stochastic differential equation
\begin{equation}\label{stheateqSDE}
\d u_t= \frac 12 \Delta u_t \d t+ \beta\, u_t\, \d B_t,
\end{equation}
where $B_t$ is a cylindrical Wiener process in $L^2(\R^d)$.
Since the solution to 
\eqref{stheateqSDE} is not well defined,
a standard approach to treat this equation
is to introduce a regularization of the 
process $B_t$, followed by a suitable rescaling of 
the coupling coefficients and subsequently 
passing to a limit as the regularization is turned off. 
In one space dimension $d = 1$,
this task was carried out by Bertini-Cancrini (\cite{BC95}) by  
expressing the regularized process by a Feynman-Kac formula; 
after a simple renormalization 
({\it{the Wick exponential}}), a meaningful expression 
was obtained when the mollification was removed. 
The renormalized Feynman-Kac formula defines a process 
with continuous (in space and time) trajectories and
it solves the equation \eqref{stheateqSDE} 
(when the stochastic differential is interpreted in the Ito sense).  
Extending this procedure to $d = 2$ (where small scale singularities coming from the noise are stronger), Bertini-Cancrini (\cite{BC98}) 
introduced a rescaling of the coupling constant 
  $$
\beta=\beta(\eps)= \left( \frac{2\pi}{\log \eps^{-1}}+ \frac {C}{(\log \eps^{-1})^2} \right)^{1/2} \qquad C\in \R
$$
which vanishes as $\eps\to 0$. It turned out that the covariance $\E[Z_\eps(t,x) Z_\eps(t,y)]$ of the regularized field $Z_\eps$ converges to a non-trivial limit as the mollification is removed,
but the limiting law of $Z_\eps$ was not identified in \cite{BC98}. 
The latter identification was recently 
carried out by Caravenna, Sun and Zygouras (\cite{CSZ15})  and by
Feng \cite{Feng},
who proved that, in $d=2$,
if $\beta_\eps$ is chosen to be $\beta \sqrt{2\pi\,\,[\log(1/\eps)]^{-1}}$, then for $\beta<1$, $Z_\eps$ converges in law to a random variable with an explicit distribution,
while for $\beta\geq 1$, $Z_\eps$ converges in law to $0$.

The results of this article concern related 
statements for $d\geq 3$ pertaining to the smoothened and rescaled equation
$$
\begin{aligned}
& \d u_{\eps,t}= \frac 12 \Delta u_{\eps,t}+ \beta\, \eps^{\frac{d-2}2}\, \, u_{\eps,t} \, \d B_{\eps,t}\\
& u_{\eps,0}= 1
 \end{aligned}
$$
Write $u_\eps(x):=u_{\eps,1}(x)$.
Our main result shows that for every $x\in \R^d$, for any $\beta$ small enough
$u_\eps(x)$ converges in distribution
to a non-degenerate random variable $Z_\infty=Z_\infty(\beta)$, i.e., {\it{weak disorder}} prevails, 
while for $\beta$ large enough, $u_\eps(x)$ converges in probability to $0$, i.e., {\it{strong disorder}} takes place. 
We also show that for 
$\beta$ small enough and any suitable test function 
$f$, $u_\eps(f)=\int  f(x) u_\eps(x)\d x$ converges in probability 
to $\int  f(x) \ \d x$. We remark that our results, 
unlike \cite{CSZ15}, do not charaterize the limiting non-degenerate random variable $Z_\infty(\beta)$,
nor do they
identify the exact critical threshold for the value of $\beta$ (which happens to be $1$ in $d=2$), where the departure
from weak disorder to strong disorder takes place.

\section{Main results.} \label{sec-results}
\subsection{Preliminaries.}
We consider a complete probability space $(\Omega,\mathcal F, \P)$ and a 
cylindrical Wiener process $B=(B_t)_{t\geq 0}$ on $L^2(\R^d)$. 
The latter is defined as the  centered Gaussian process with covariance 
$$
\E\bigg(B_s(f)B_t(g)\bigg) = \big(s\wedge t\big) \, \int_{\R^d} f(x) g(x)\d x \qquad f,g \in \mathcal S(\R^d).
$$
Here $\mathcal S=\mathcal S(\R^d)$ is the Schwartz space of rapidly decreasing functions in $\R^d$. To define $B$ pointwise in $\R^d$, we need the regularization
$$
B_{\eps,t}(x)= B_t\big(\phi_\eps(x- \cdot)\big),
$$
with respect to some mollifier 
$$
\phi_\eps= \eps^{-d} \phi(x/\eps).
$$
Here $\phi$ is some smooth, non-negative, compactly supported and even function such that $\int_{\R^d} \phi(x)\d x =1$. Then $\int_{\R^d} \phi_\eps(x)\d x =1$, and $\phi_\eps\Rightarrow\delta_0$ weakly as probability measures. Furthermore, for any $\eps>0$, $B_\eps=(B_{\eps,t})_{t\geq 0}$ is also a centered Gaussian process with covariance 
$$
\E\big(B_{s,\eps}(x)B_{t,\eps}(y)\big)= \big(s\wedge t\big) V_\eps(x-y) 
$$
where we introduced
\begin{equation}
  \label{thursday1}
 V=\phi\star \phi, V_{\eps,\delta}=
  \phi_\eps\star \phi_\delta,
  V_\eps=V_{\eps,\eps}.
\end{equation}
Note that $V_\eps(x)=\eps^{-d}V(x/\eps)$.

For any $\beta>0$ and $\eps>0$, we
consider the stochastic differential equation
\begin{equation}\label{regspde}
\begin{aligned}
	&\d u_{\eps,t}= \frac 12 \Delta u_{\eps,t} \d t+ \beta \eps^{\frac{d-2}{2}}\, \, u_{\eps, t} \, \d B_{\eps,t} \\
&u_{\eps,0}=1,
\end{aligned}
\end{equation}
where the stochastic differential is interpreted in the classical Ito sense
(since our smoothing of $B$ was done in space only, the well-defined
solution
$u_{\eps,t}$ is adapted to the natural filtration $\mathcal{G}_t=
\sigma(\{B_{\eps,s}(x), x\in \R^d, s\leq t\}$). 
Our goal is to study the behavior of $u_{\eps,1}(x)$
as the mollification parameter $\eps$ is turned off. For this, we will
use
a convenient Feynman-Kac representation of $u_{\eps,t}(x)$, which we introduce in Section \ref{sec-FK-rep}
after stating our main results.

\subsection{Main results: Weak and strong disorder.}
Henceforth we fix $d\geq 3$ and set $u_\eps(x):=u_{\eps,1}(x)$ and,
for any $f\in \mathcal S(\R^d)$, we 
write $u_{\eps}(f)= \int_{\R^d}  u_\eps(x) f(x)\d x$. 
Here is the statement of our first main result. 
\begin{theorem}[Convergence to the heat equation in the
	weak disorder phase]\label{thm1}
There exists $\beta_\star\in(0,\infty)$ such that for all 
$\beta<\beta_\star$ and any $f\in\mathcal S(\R^d)$, $u_\eps(f)$ 
converges in probability to $\int_{\R^d} f(x) dx$ as $\eps\to 0$.
Furthermore, for any $\beta< \beta_\star$ and 
any $x\in\R^d$, $u_\eps(x)$ converges in distribution
to a random variable $Z_\infty$ which is positive almost surely.
 \end{theorem}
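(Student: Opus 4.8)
The plan is to work throughout with the Feynman--Kac representation announced in Section~\ref{sec-FK-rep}: writing $W$ for a Brownian motion started at $x$ and subtracting the exact Wick correction, one has
\[
u_\eps(x)=\E_x\!\left[\exp\!\Big(\beta\eps^{\frac{d-2}2}\!\int_0^1\!\!\int \phi_\eps(W_s-y)\,B(\d s,\d y)-\tfrac12\beta^2\eps^{d-2}V_\eps(0)\Big)\right].
\]
Since the conditional (given $W$) variance of the Gaussian term is exactly $\beta^2\eps^{d-2}V_\eps(0)$, the integrand is a mean-one positive martingale in $B$, so $u_\eps(x)>0$ and $\E u_\eps(x)=1$. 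Squaring and introducing two independent replicas $W^1,W^2$ with $R_s:=W^1_s-W^2_s$ (started at $0$ when both start at $x$) gives the basic two-replica identity
\[
\E\big[u_\eps(x)^2\big]=\E^{\otimes2}_{x,x}\Big[\exp\Big(\beta^2\eps^{d-2}\int_0^1 V_\eps(R_s)\,\d s\Big)\Big].
\]
Because $V_\eps(z)=\eps^{-d}V(z/\eps)$ and $R$ is a constant multiple of Brownian motion, hence transient in $d\geq3$, the mean occupation obeys $\sup_z\E_z[\beta^2\eps^{d-2}\int_0^\infty V_\eps(R_s)\,\d s]=\beta^2K+o(1)$ with $K=c_d\int_{\R^d}|w|^{2-d}V(w)\,\d w<\infty$ independent of $\eps$: the exponent $\eps^{d-2}$ is exactly critical, cancelling the $\eps^{2-d}$ singularity of the Green's function against the $\eps^d$ mass of $V_\eps$. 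By Khasminskii's lemma this yields, whenever $\beta^2K<1$, the uniform bound $\sup_\eps\E[u_\eps(x)^2]\leq(1-\beta^2K)^{-1}$. I would therefore set $\beta_\star:=K^{-1/2}$; this is the $L^2$ threshold and suffices for the theorem, though the true weak-disorder threshold may be larger.

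For the $L^2$ (hence distributional) convergence of $u_\eps(x)$, I would show $\{u_\eps(x)\}$ is Cauchy in $L^2$ by proving that the three mixed second moments $\E[u_\eps u_{\eps'}]$, $\E[u_\eps^2]$, $\E[u_{\eps'}^2]$ share a common limit. Each is a two-replica Brownian expectation of $\exp(\cdots)$ carrying the potential $V_{\eps,\eps'}$, $V_\eps$ or $V_{\eps'}$ respectively; expanding the exponential and using the time-ordered structure, the $k$-th term converges, for every one of the three potentials, to $\beta^{2k}$ times the \emph{same} renormalized $k$-fold self-intersection constant, precisely because $V_\eps$, $V_{\eps'}$ and $V_{\eps,\eps'}$ all integrate to $1$ and concentrate at the vanishing scale $\eps\vee\eps'$ along the transient path $R$. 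The uniform Khasminskii bound dominates the series and legitimizes passing to the limit term by term, so $\E[(u_\eps(x)-u_{\eps'}(x))^2]\to0$ and $u_\eps(x)\to Z_\infty$ in $L^2$, which gives convergence in distribution at once.

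For the statement $u_\eps(f)\to\int f$, note $\E u_\eps(f)=\int f(x)\,\d x$, so it suffices that $\Var(u_\eps(f))\to0$. Here $\Var(u_\eps(f))=\int\!\!\int f(x)f(y)\big(\E[u_\eps(x)u_\eps(y)]-1\big)\,\d x\,\d y$, and the two-point function is the two-replica expectation with $R$ now \emph{started at} $x-y$. For fixed $x\neq y$ the expected overlap is $\beta^2\eps^{d-2}\int_0^1\!\int p^R_s(x-y,z)V_\eps(z)\,\d z\,\d s\sim\beta^2\eps^{d-2}\int_0^1 p^R_s(x-y,0)\,\d s\to0$, so $\E[u_\eps(x)u_\eps(y)]\to1$ off the diagonal; the Khasminskii bound supplies an integrable dominating function, and since the diagonal is Lebesgue-null in the double integral, dominated convergence forces $\Var(u_\eps(f))\to0$.

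Finally, positivity. We have $Z_\infty\geq0$ a.s.\ (an $L^2$-limit of positive variables), $\E Z_\infty=1$ and $\E Z_\infty^2<\infty$, so Paley--Zygmund gives $\P(Z_\infty>0)>0$. To upgrade this to $\P(Z_\infty>0)=1$ I would establish a $0$--$1$ law for $\{Z_\infty=0\}$: using the independence of the white noise over disjoint time layers together with the Markov/semigroup structure of \eqref{regspde}, one shows $\{Z_\infty=0\}$ is measurable with respect to a trivial (germ/tail) $\sigma$-field, whence $\P(Z_\infty=0)\in\{0,1\}$, and since it is not $1$ it must be $0$. I expect this final step---making the $0$--$1$ law rigorous in the continuum, i.e.\ identifying a tail structure compatible with the $\eps\to0$ limit---to be the main obstacle; it is precisely here that the Gaussian multiplicative chaos viewpoint, where subcritical ($L^2$) chaos is known to be a.s.\ non-degenerate, provides the natural substitute for the martingale arguments available in the discrete polymer setting.
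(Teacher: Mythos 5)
Your treatment of the first assertion (the variance computation for $u_\eps(f)$, the two-replica representation, the off-diagonal decay of the overlap after rescaling by $\eps$, Khasminskii/Portenko for the uniform dominating bound, and dominated convergence) is essentially the paper's argument, and your identification of the $L^2$ threshold $\beta_\star$ via the transience of $R$ in $d\geq 3$ matches Lemma \ref{lemma1}. The positivity argument via a $0$--$1$ law is also the route the paper takes (the event $\{Z_\eps\not\to 0\}$ is a tail event for $t\mapsto B(t,\cdot)$ in the time-reversed representation), so that part of your plan is sound even though you only sketch it.

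The genuine gap is your second paragraph: the claim that $\{u_\eps(x)\}$ is Cauchy in $L^2$ is false, and the paper devotes a remark to showing exactly this. The cross moment is
\begin{equation*}
\E\big[u_\eps(x)\,u_{\eps'}(x)\big]=E_0^{\otimes 2}\Big[\exp\Big\{\beta^2(\eps\eps')^{\frac{d-2}{2}}\int_0^1 V_{\eps,\eps'}(W_s-W'_s)\,\d s\Big\}\Big],
\end{equation*}
and the point you miss is the mismatch between the prefactor $(\eps\eps')^{(d-2)/2}$ and the scale at which $V_{\eps,\eps'}=\phi_\eps\star\phi_{\eps'}$ concentrates, namely $\eps\vee\eps'$ (or $\sqrt{\eps^2+\eps'^2}$ for Gaussian mollifiers). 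After Brownian rescaling the effective coupling constant is $\beta^2(\eps\eps')^{(d-2)/2}/(\eps^2+\eps'^2)^{(d-2)/2}$, which tends to $0$ as $\eps'\to 0$ with $\eps$ fixed; hence $\E[u_\eps u_{\eps'}]\to 1$ in the iterated limit while $\E[u_\eps^2]$ stays bounded away from $1$, so $\E[(u_\eps-u_{\eps'})^2]$ does not vanish. Your heuristic that ``the three potentials all integrate to $1$ and concentrate at scale $\eps\vee\eps'$, so the chaos coefficients share a common limit'' is precisely where this breaks down: equal total mass does not compensate for the wrong concentration scale against the critical prefactor. Consequently $u_\eps(x)$ does not converge in $L^2$ or even in probability, which is why the theorem only asserts convergence in distribution. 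The correct route is to observe that $u_{\eps,1}(0)$ is equal \emph{in law} (via time reversal of $\dot B$ and diffusive rescaling) to $Z_\eps$ of \eqref{eq-ofer1}, which \emph{is} a positive mean-one martingale in the parameter $t=1/\eps^2$; martingale convergence gives $Z_\eps\to Z_\infty$ almost surely, the $L^2$ bound gives uniform integrability so $Z_\infty\not\equiv 0$, the $0$--$1$ law gives $\P(Z_\infty=0)=0$, and the distributional identity transfers this to $u_\eps(x)$.
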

 \begin{remark}
	 The first statement in 
Theorem \ref{thm1} implies that 
$u_\eps$ converges in the sense of distributions 
to the solution of the heat equation.
Although for simplicity we content ourselves with the initial condition 
$Z_\eps(0,x)= 1$ in 
\eqref{regspde},
the same statement continues to hold for reasonably nice initial condition
$u_\eps(0,x)=u_0(x)$.
 \end{remark}
 \begin{remark}
	 While we do not discuss it in detail, the Feynman-Kac representation
	 of $u_\eps(x)$ that we introduce in the next subsection shows
	 that $u_\eps(x)$ and $u_\eps(y)$ become asymptotically 
	 independent as $\eps\to 0$; this explains the fact that smoothing 
	 with $f$ makes $u_\eps(f)$ deterministic.
 \end{remark}
 The proof of Theorem \ref{thm1} 
 is based on an $L^2$ computation and 
 is presented in Section \ref{sec-thm1}.

 \begin{theorem}[The strong disorder phase]\label{theo-UT}
 There is $\beta^*>0$ such that for all $\beta>\beta^*$,
  $u_\eps\to 0$ in probability.
\end{theorem}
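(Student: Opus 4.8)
The plan is to recast ``$u_\eps\to 0$ in probability'' as a statement about the \emph{size-biased} measure and then win an energy-versus-entropy competition along a distinguished ``spine'' path. Throughout write $\E^{W}_x$ for expectation over a Brownian path $W=(W_s)_{s\le 1}$ started at $x$ and independent of the noise. By the Feynman--Kac representation (developed in Section~\ref{sec-FK-rep}) one has $u_\eps(x)=\E^{W}_x[\exp(\Xi_\eps(W))]$, where, conditionally on $W$, the Wick exponent $\Xi_\eps(W)=G_\eps(W)-\tfrac12\sigma_\eps^2$ is Gaussian with $G_\eps(W)=\beta\eps^{(d-2)/2}\int_0^1\!\int \phi_\eps(W_s-y)\,B(\d s,\d y)$ and variance $\sigma_\eps^2=\beta^2\eps^{d-2}V_\eps(0)=\beta^2 V(0)\eps^{-2}$. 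In particular $\E u_\eps(x)=1$, so I may define the tilted law $\widetilde\P_\eps=u_\eps(x)\,\P$. Since $\P(M>\delta)=\widetilde\E[M^{-1};M>\delta]\le\widetilde\E[M^{-1}\wedge\delta^{-1}]$ for any $M\ge 0$ with $\E M=1$, bounded convergence shows that if $u_\eps(x)\to\infty$ in $\widetilde\P_\eps$-probability then $u_\eps(x)\to 0$ in $\P$-probability. Thus it suffices to prove the former for $\beta$ large.

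Next I would identify $\widetilde\P_\eps$ as a spine measure. Because $u_\eps(x)$ is a Wiener average of exponential martingales $\exp(\Xi_\eps(W))$ of unit mean, tilting by it amounts to sampling a spine $W^{(0)}\sim$ Wiener measure from $x$ and shifting the noise by the Cameron--Martin drift $h_{W^{(0)}}=\beta\eps^{(d-2)/2}\phi_\eps(W^{(0)}_s-\cdot)$; in particular the spine is itself a Brownian motion under $\widetilde\P_\eps$. Rewriting the shifted noise in terms of a clean $B\sim\P$ gives
\begin{equation*}
u_\eps(x)=\E^{W}_x\Big[\exp\Big(\Xi_\eps(W)+\beta^2\eps^{d-2}\!\int_0^1 V_\eps(W^{(0)}_s-W_s)\,\d s\Big)\Big],
\end{equation*}
the extra nonnegative term being the interaction $\langle h_W,h_{W^{(0)}}\rangle$ between a fresh replica $W$ and the spine.

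The heart of the argument is a lower bound obtained by confining $W$ to a thin tube $T=\{\sup_{s\le 1}|W_s-W^{(0)}_s|\le\rho\eps\}$ around the spine. On $T$ the interaction is at least $\beta^2\eps^{-2}\inf_{|u|\le\rho}V(u)=(1-o_\rho(1))\sigma_\eps^2$, so the spine digs an energy well whose depth is the \emph{full} field variance, while Wick renormalization subtracts only half of it. To pass the remaining field through the average without losing to its log-normal fluctuations I would apply Jensen's inequality to Wiener measure $\nu_T$ conditioned on $T$: the averaged exponent $\E_{\nu_T}\Xi_\eps(W)$ is a single Gaussian of mean $-\tfrac12\sigma_\eps^2$ and variance at most $\sigma_\eps^2$, hence is $\ge-\tfrac12\sigma_\eps^2-o(\eps^{-2})$ with $\widetilde\P_\eps$-probability tending to one. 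Combined with a Brownian confinement estimate $\P^{W}_x(T)\ge\exp(-C(\rho)\eps^{-2})$ at the mollification scale, this yields, with high probability,
\begin{equation*}
u_\eps(x)\ \ge\ \exp\Big(\big(\tfrac12-o_\rho(1)\big)\sigma_\eps^2-C(\rho)\eps^{-2}-o(\eps^{-2})\Big)=\exp\Big(\eps^{-2}\big(\big(\tfrac12-o_\rho(1)\big)\beta^2 V(0)-C(\rho)-o(1)\big)\Big).
\end{equation*}
Fixing $\rho$ small and then $\beta$ large so that $\tfrac14\beta^2 V(0)>C(\rho)$ makes the exponent a positive multiple of $\eps^{-2}$, so $u_\eps(x)\to\infty$ in $\widetilde\P_\eps$-probability; this simultaneously produces a finite threshold $\beta^*$.

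The main obstacle is precisely this high-probability lower bound on the \emph{tilted, confined} partition function. A direct second-moment or Paley--Zygmund attempt fails: because two replicas trapped in the same $\eps$-tube interact strongly, the restricted second moment exceeds the squared mean by a factor $\exp(c\beta^2\eps^{-2})$, reflecting that the Wick average is dominated by rare favourable disorder and is typically far below its mean. The device that rescues the argument is to route the field through Jensen, so that only the \emph{averaged} exponent $\E_{\nu_T}\Xi_\eps$ enters, whose standard deviation is merely $O(\eps^{-1})$ rather than $O(\eps^{-2})$. The residual technical work is then the classical but delicate confinement estimate $\P^{W}_x(T)\ge e^{-C(\rho)\eps^{-2}}$ for a tube of width $\rho\eps$ around a typical Brownian spine, with an explicit finite $C(\rho)$, together with the check that the $2{:}1$ ratio between the energy well ($\sigma_\eps^2$) and the Wick penalty ($\tfrac12\sigma_\eps^2$) leaves a margin of order $\beta^2\eps^{-2}$ large enough to absorb this entropic cost once $\beta$ is large.
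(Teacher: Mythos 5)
Your proposal is correct in its architecture and rests on the same core mechanism as the paper's proof: size-bias by $u_\eps$, recognize the tilted measure as a spine decomposition via Cameron--Martin, confine a fresh replica to an $\eps$-tube around the spine so that the interaction energy $\approx \beta^2 V(0)\eps^{-2}$ beats the Wick penalty $\tfrac12\beta^2V(0)\eps^{-2}$ plus the entropic cost $C(\rho)\eps^{-2}$ of confinement, and then take $\beta$ large. Two of your sub-steps differ from the paper's and are, if anything, more elementary. First, your reduction $\P(u_\eps>\delta)=\widetilde\E_\eps[u_\eps^{-1};u_\eps>\delta]\le \widetilde\E_\eps[u_\eps^{-1}\wedge\delta^{-1}]$ goes directly from ``$u_\eps\to\infty$ under $\widetilde\P_\eps$'' to ``$u_\eps\to0$ in $\P$-probability,'' bypassing the paper's detour through uniform integrability, the tail $0$--$1$ law, and the Comets--Yoshida martingale argument (Proposition \ref{prop-CY06} and Lemma \ref{lem-increase}); the paper's route buys the sharper dichotomy of Corollary \ref{cor-beta}, which yours does not need for the theorem itself. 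Second, you control the fluctuations of the tube-restricted average by Jensen with respect to the conditioned Wiener measure $\nu_T$, so that only the linear functional $\E_{\nu_T}G_\eps$ of the field enters, a Gaussian of standard deviation $O(\eps^{-1})=o(\eps^{-2})$; the paper instead invokes Kahane's comparison inequality with the constant kernel $V(0)\eps^{-2}$ to reduce to a single Gaussian. Both devices defeat the same obstruction (the failure of the second-moment method on the tube, which you correctly diagnose), and Jensen is the more self-contained of the two.

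The one point you assert rather than prove is the confinement estimate $P^{W}_x(T)\ge e^{-C(\rho)\eps^{-2}}$ with a \emph{finite} $C(\rho)$, for a tube of width $\rho\eps$ centered on the Brownian spine. This is not the classical small-ball estimate around a fixed point: the tube follows a Brownian path, and the finiteness of the exponential rate is exactly the content of the paper's Lemma \ref{lem-tau}, proved there via the Cameron--Martin shift along an admissible approximation of the spine, Kingman's subadditive ergodic theorem to get a deterministic rate $\kappa_2$ for the accumulated Cameron--Martin energy, and the Borell--Tsirelson-Ibragimov-Sudakov inequality to show $\kappa_2<\infty$. Note also that the bound cannot hold deterministically in the spine; it holds with a spine-dependent prefactor $\chi(W^{(0)})$ (equivalently, with probability tending to one over the spine), which is all your argument needs but should be stated. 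With that lemma supplied, your proof is complete.
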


The proof of Theorem \ref{theo-UT} is presented in Section \ref{sec-theo-UT}. 
This proof avoids the use of the well-known {\it{fractional moment method}} 
which pervades the proofs of strong disorder assertions in realm of 
the 
aforementioned literature on the discrete directed polymer models, 
and instead uses the theory of 
{\it{Gaussian multiplicative chaos}} (GMC).

As a by-product of our arguments, we have the following corollary.
\begin{cor}
	\label{cor-beta}
	There is a $\bar\beta\in (0,\infty)$ such that, as $\eps\to 0$,
	$u_\eps(0)$ converges
	to $0$ in probability for all $\beta > \bar \beta$ while
	$u_\eps(0)$ converges in distribution to a non-degenerate, strictly 
	positive random variable $Z_\infty=Z_\infty(\beta)$ 
	when $\beta<\bar \beta$.
\end{cor}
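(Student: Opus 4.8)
The plan is to derive the sharp threshold by combining the two theorems with a zero--one dichotomy and a monotonicity statement in $\beta$. First observe that $\E[u_\eps(0)]=1$ for every $\eps$, since taking expectations in \eqref{regspde} kills the It\^o term and leaves the heat flow acting on the constant $1$. Set $\bar\beta:=\sup\{\beta>0:\ u_\eps(0)\text{ converges in distribution to a non-degenerate strictly positive limit}\}$. Theorem \ref{thm1} places $(0,\beta_\star)$ inside the weak-disorder set and Theorem \ref{theo-UT} places $(\beta^*,\infty)$ inside the strong-disorder set, so $\bar\beta\in[\beta_\star,\beta^*]\subset(0,\infty)$ automatically. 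It then remains to show that weak disorder occupies all of $(0,\bar\beta)$ and strong disorder all of $(\bar\beta,\infty)$, which requires two ingredients: (i) every fixed $\beta$ is either weak or strong, and (ii) monotonicity of this alternative in $\beta$.

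For (i) I would use the Feynman--Kac representation introduced in Section \ref{sec-FK-rep}, which writes $u_\eps(0)=\E^W_0[\exp(\beta\eps^{(d-2)/2}\int_0^1 \d B_\eps(s,W_s)-\tfrac12\beta^2\eps^{-2}V(0))]$, exhibiting $u_\eps(0)$ as the total mass of a Gaussian multiplicative chaos built over the Wiener measure; note the Wick constant is path-independent because the quadratic variation of the stochastic integral equals $\beta^2\eps^{d-2}V_\eps(0)=\beta^2\eps^{-2}V(0)$ for every path. Decomposing the white noise into independent contributions from successively finer scales and letting $\mathcal F_n$ be the $\sigma$-field generated by scales coarser than $\eps_n\downarrow 0$ (after, if necessary, replacing the fixed mollifier by an equivalent scalewise decomposition of the noise), the approximations $(u_{\eps_n}(0))_n$ form a non-negative $(\mathcal F_n)$-martingale of mean one. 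By martingale convergence it has an almost sure limit $Z_\infty(\beta)\geq 0$ realizing the in-distribution limit of Theorem \ref{thm1}. Since $\{Z_\infty(\beta)>0\}$ is measurable with respect to the tail $\sigma$-field of the scale increments, a Kolmogorov zero--one law forces $\P(Z_\infty(\beta)>0)\in\{0,1\}$: either the martingale is uniformly integrable with $\E Z_\infty(\beta)=1$ (weak disorder, $Z_\infty>0$ a.s.), or $Z_\infty(\beta)=0$ a.s. (strong disorder, so $u_\eps(0)\to 0$ in probability).

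The hard part is (ii): showing weak disorder is preserved as $\beta$ decreases, equivalently that $\beta\mapsto\P(Z_\infty(\beta)>0)$ is non-increasing. The diverging Wick constant $\beta^2\eps^{-2}V(0)$ obstructs any naive pathwise comparison across $\beta$: a Jensen or H\"older inequality relating $u_\eps(0)$ at $\beta_1$ and $\beta_2$ produces a factor $e^{c\eps^{-2}}$ that explodes as $\eps\to 0$ and yields nothing. Instead I would compare the two scale-martingales directly through the Gaussian structure, via a Kahane-type convexity/interpolation inequality between the fields $\beta_1 H_\eps$ and $\beta_2 H_\eps$ over path space, with the renormalizations matched scale by scale, so as to transfer uniform integrability from a weakly disordered $\beta_2$ to every $\beta_1<\beta_2$. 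I expect this Gaussian comparison, carried out uniformly in the mollification scale, to be the main technical obstacle, precisely because it is the point where the continuum renormalization must be controlled. Granting the monotonicity, the weak-disorder set is a down-closed interval, hence equals $(0,\bar\beta)$ up to its endpoint, and its complement $(\bar\beta,\infty)$ is strongly disordered by the dichotomy of (i); the non-degeneracy and strict positivity of $Z_\infty(\beta)$ for $\beta<\bar\beta$ is exactly weak disorder. With $\bar\beta\in[\beta_\star,\beta^*]\subset(0,\infty)$ this completes the proof, the critical value $\bar\beta$ itself being left unspecified as in the statement.
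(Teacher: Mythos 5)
Your overall architecture (a zero--one dichotomy plus monotonicity in $\beta$) matches the paper's, but both of your key steps contain genuine gaps, and one of your setups is wrong. First, $u_\eps(0)$ with the fixed mollifier $\phi_\eps$ is \emph{not} a martingale in any scale filtration: the paper explicitly notes that $u_\eps$ is not even Cauchy in $L^2(\P)$, which is exactly why Theorem \ref{thm1} only asserts convergence in distribution. The martingale structure is obtained differently, via the diffusive rescaling and time-reversal identity $u_\eps(0)\stackrel{(d)}{=}Z_\eps$ of \eqref{eq-ofer1}, where $Z_\eps$ is a mean-one martingale indexed by \emph{time} $t=\eps^{-2}$ with respect to the temporal filtration of $B$; your proposed ``equivalent scalewise decomposition of the noise'' would change the object being studied. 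Second, and more seriously, the dichotomy ``either $(Z_\eps)$ is uniformly integrable or $Z_\infty=0$ a.s.''\ does not follow from a Kolmogorov zero--one law. The zero--one law only gives $\P(Z_\infty>0)\in\{0,1\}$; the implication $\P(Z_\infty>0)=1\Rightarrow$ uniform integrability is a separate, non-trivial fact (the paper's Proposition \ref{prop-CY06}, proved by a Comets--Yoshida-style shift and iteration argument). Without it your downward transfer of weak disorder cannot even start, because your comparison inequality transports uniform integrability, not positivity of the limit.

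Third, the monotonicity step, which you correctly identify as the crux, is only conjectured (``granting the monotonicity\dots''), and your worry about the diverging Wick constant is unfounded. Writing $\beta'=\rho\beta$ with $\rho<1$ and letting $B'$ be an independent copy of $B$, one has the exact identity $Z_{\eps,\rho\beta}(B)=\E\big[Z_{\eps,\beta}(\rho B+\sqrt{1-\rho^2}\,B')\mid B\big]$: the conditional expectation over $B'$ produces precisely the factor $\exp\{(1-\rho^2)\beta^2 V(0)/(2\eps^2)\}$ needed to convert the Wick constant at $\beta$ into the one at $\rho\beta$, so the renormalizations match identically at every fixed $\eps$ and no uniformity issue arises. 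Jensen's inequality applied to the la Vall\'{e}e-Poussin function of the uniformly integrable family $\{Z_{\eps,\beta}\}$ then gives $\sup_\eps \E f(Z_{\eps,\beta'})\leq\sup_\eps\E f(Z_{\eps,\beta})<\infty$, i.e.\ uniform integrability at $\beta'$. (Your Kahane-comparison idea would also work, since $\beta'^2 K_\eps\leq \beta^2 K_\eps$ pointwise, and is essentially this same Gaussian interpolation -- but you must actually run it.) With $\bar\beta$ defined as the supremum of the uniform-integrability set, the corollary then follows from Proposition \ref{prop-CY06} and the zero--one law \eqref{0-1} exactly as in the paper.
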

The constant $\bar\beta$ is given as the threshold for the uniform integrability
of a certain family of martingales
$Z_{\eps,\beta}$; we refer to the proof of Corollary \ref{cor-beta} for details, which can also be found at the end of Section \ref{sec-theo-UT}. We leave unresolved the question of what happens at $\beta=\bar\beta$.

\begin{remark} Clearly $\bar \beta$ depends on the dimension $d\geq 3$ and the mollifier $\phi$. As mentioned in Section \ref{sec-intro}, it
	remains an open problem to determine the exact value of $\overline \beta\in (0,\infty)$ and to identify
the exact distribution of the positive random variable $Z_\infty$ appearing in 
Corollary \ref{cor-beta}.
\end{remark}

\subsection{A Feynman-Kac representation.} \label{sec-FK-rep}
For any $x\in \R^d$, 
let $P_x$ denote the Wiener measure corresponding to a $d$-dimensional Brownian motion $(W_t)_{t\geq 0}$ starting at $x$ and independent of the cylindrical Wiener process $B$. 
$E_x$ will denote the corresponding expectation. 
For fixed $W$, set
 \begin{equation}\label{MIto}
 M_{\eps,t}(W)=  \int_0^t \int_{\R^d} \phi_\eps(W_s- x) \,\, \dot{B}(t-s,\d x)\d s 
 \end{equation}
as a Wiener integral.
For  
two fixed
$W$ and $W^\prime$, 
the  covariance is given by
\begin{equation}\label{quadvar}
\E\left(M_{\eps,t}(W)\cdot M_{\delta,t}(W^\prime)\right)= 
\int_0^t  \, V_{\eps,\delta}(W_s- W^\prime_s) \d s
\end{equation}
(recall \eqref{thursday1}. Here and later, we write $\E$ for integration over
$B$ only, keeping $W$ fixed).
We also note that, for any fixed $W$,
$$
\E\left(M_{\eps,t}^2(W)\right)=
t V_\eps(0)= t (\phi_\eps\star\phi_\eps)(0),
$$ 
which diverges like $\eps^{-d}$ as $\eps\to 0$.

We now turn to \eqref{regspde} and write its renormalized Feynman-Kac solution as
\begin{equation}\label{renormFK}
\begin{aligned}
	u_{\eps,t}(x)&=E_x \bigg[\exp\bigg\{ \beta \eps^{(d-2)/2} 
		M_{\eps,t}(W)\,\,- 
		\frac{\beta^2 \eps^{d-2}}{2}
	\E(M_{\eps,t}(W)^2)\bigg\}\bigg] \\
	&= E_x \bigg[\exp\bigg\{{\beta \eps^{(d-2)/2}} \int_0^t \int_{\R^d} \phi_\eps(W_s- x) \,\, \dot{B}(t-s,\d x) \d s
	  - \frac {\beta^2 \eps^{d-2}}2\,\, t V_\eps(0)\bigg\}\bigg].
\end{aligned}
\end{equation}
Note that 
$\E[u_{\eps,t}(x)]=1$.  

For our purposes, it is convenient to introduce another representation of $u_{\eps,t}$. 
Note that by rescaling of time and space, $\eps^{-1} W_s$ has the same distribution as $W_{s\eps^{-2}}$, while
$\dot{B}(s,\d x)\d s$ has the same distribution as 
  $$
\eps^{d/2+1} \dot B\big(s\eps^{-2}, \d \eps^{-1} x \big) \d s.
$$
Then, by \eqref{MIto}, for a fixed $W$,
 \begin{equation*}\label{martdef}
 \begin{aligned}
	 M_{\eps,t}(W)&\stackrel{(\mathrm d)}{=}\frac 1 {\eps^{(d-2)/2}} \int_0^{t\eps^{-2}} \int_{\R^d} \phi\big({y-\eps^{-1} W_{s \eps^2}}\big)\dot{B}(t/\eps^2-s,\d y) \d s \\
 \end{aligned}
 \end{equation*}
Hence \eqref{renormFK} implies  that
\begin{equation}\label{eq-trep}
\begin{aligned}
 u_{\eps,t}(x)\stackrel{(\mathrm d)}=E_{\frac x\eps}\bigg[\exp\bigg\{\beta \int_0^{t\eps^{-2}} \int_{\R^d} \phi\big({y-W_s}\big)\dot B(t/\eps^2-s, \d y) \d s - \frac {\beta^2}{2\eps^2} t V(0)\bigg\}\bigg]. 
 \end{aligned}
 \end{equation} 
 Recall that $u_\eps(x)=u_{\eps,1}(x)$.
 Using the invariance of the distribution of $\dot B$ under time reversal,
 we obtain that the spatially-indexed process $\{u_\eps(x)\}$ possesses the same
 distribution as the process 
   $\{Z_\eps(x/\eps)\}$,
 where 
 \begin{equation}\label{eq-ofer1}
\begin{aligned}
  Z_\eps(x)=E_{x}\bigg[\exp\bigg\{\beta \int_0^{\eps^{-2}} 
\int_{\R^d}\phi\big({y-W_s}\big)\dot B(s, \d y) \d s  - \frac {\beta^2}{2\eps^2} V(0)\bigg\}\bigg] 
 \end{aligned}
 \end{equation} 

\section{Proof of Theorem \ref{thm1}: The second moment method}\label{sec-thm1}
We start with an elementary computation.
\begin{lemma}\label{lemma1}
If $\beta>0$ is chosen small enough, for any $x\in \R^d$, the family $\{u_\eps(x)\}_{\eps>0}$ remains bounded in $L^2(\P)$.
\end{lemma}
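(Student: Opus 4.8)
The plan is to exploit the normalization $\E[u_\eps(x)]=1$ noted after \eqref{renormFK}: since $u_\eps(x)\ge 0$, boundedness of $\{u_\eps(x)\}_{\eps>0}$ in $L^2(\P)$ is equivalent to a bound on the second moment $\E[u_\eps(x)^2]$ that is uniform in $\eps$. First I would compute this second moment directly from the renormalized Feynman-Kac representation \eqref{renormFK} by introducing two \emph{independent} Brownian motions $W,W'$ under $E_x^{\otimes 2}$. With $W,W'$ frozen, the inner expectation over $B$ is Gaussian, so it equals the exponential of half the variance of the sum. By the note $\E(M_{\eps,1}(W)^2)=V_\eps(0)$ and by \eqref{quadvar}, the two self-variance terms are each $V_\eps(0)$ and are cancelled \emph{exactly} by the deterministic It\^o/Wick correction $-\beta^2\eps^{d-2}V_\eps(0)$; only the cross-covariance survives. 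This yields
\begin{equation*}
\E[u_\eps(x)^2]=E_x^{\otimes 2}\Big[\exp\Big\{\beta^2\eps^{d-2}\int_0^1 V_\eps(W_s-W'_s)\,\d s\Big\}\Big].
\end{equation*}

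Next I would pass to the difference process $D_s:=W_s-W'_s$, a Brownian motion (of doubled speed) started at $0$. Using $\eps^{d-2}V_\eps(z)=\eps^{-2}V(z/\eps)$ together with Brownian scaling $D_s\stackrel{(\mathrm d)}{=}\eps D_{s/\eps^2}$ and the substitution $u=s/\eps^2$, the exponent above becomes $\beta^2\int_0^{\eps^{-2}}V(D_u)\,\d u$. In particular $\E[u_\eps(x)^2]$ does not depend on $x$, and since $V=\phi\star\phi\ge 0$ it suffices to bound, uniformly in $\eps$,
\begin{equation*}
E_0\Big[\exp\Big\{\beta^2\int_0^{\eps^{-2}}V(D_u)\,\d u\Big\}\Big]\le E_0\Big[\exp\Big\{\beta^2\int_0^{\infty}V(D_u)\,\d u\Big\}\Big].
\end{equation*}

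The remaining step is the exponential integrability of this infinite-horizon additive functional. Because $d\ge 3$, Brownian motion is transient and $V$ is bounded, nonnegative and compactly supported, so its Green potential $GV(x)=\int_0^\infty E_x[V(D_u)]\,\d u=\int_{\R^d}G(x,y)V(y)\,\d y$ is finite and bounded on $\R^d$ (here $G(x,y)\asymp|x-y|^{2-d}$ is locally integrable in $d\ge 3$, and $V$ has bounded compactly supported density). By Khasminskii's lemma on exponential moments of nonnegative additive functionals, once $\beta^2\|GV\|_\infty<1$ one gets
\begin{equation*}
\sup_{x\in\R^d}E_x\Big[\exp\Big\{\beta^2\int_0^{\infty}V(D_u)\,\d u\Big\}\Big]\le\frac{1}{1-\beta^2\|GV\|_\infty}<\infty,
\end{equation*}
which holds for all $\beta$ small enough and gives the desired uniform $L^2$ bound.

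The one genuinely substantive point is this \emph{uniform} exponential integrability over the unbounded horizon $[0,\eps^{-2}]$ as $\eps\to0$, and it is precisely here that transience ($d\ge 3$) and the smallness of $\beta$ are essential. In $d=1,2$ recurrence forces $\int_0^\infty V(D_u)\,\d u=\infty$ almost surely and the estimate breaks down — the analytic reason one is compelled to take $\beta=\beta_\eps\to 0$ in low dimensions. I would therefore concentrate the effort on verifying the hypothesis $\beta^2\|GV\|_\infty<1$ of Khasminskii's lemma, the boundedness of $GV$ being the only input requiring the dimensional restriction.
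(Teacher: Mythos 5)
Your proposal is correct and follows essentially the same route as the paper: the same two-replica Gaussian computation reducing $\E[u_\eps(x)^2]$ to an exponential moment of $\beta^2\int_0^{\eps^{-2}}V$ along the difference Brownian motion, the same monotone extension to the infinite horizon using $V\ge 0$ and transience in $d\ge 3$, and the same smallness-of-$\beta$ criterion via the Khasminskii-type bound (the paper cites it as Portenko's lemma, with hypothesis $\sup_x\beta^2 E_x\int_0^\infty V(W_s)\,\d s=\eta<1$, which is exactly your condition $\beta^2\|GV\|_\infty<1$). No gaps.
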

\begin{proof}

Let $W$ and $W^\prime$ be two independent standard Brownian motions with 
$P_0 \otimes P_0$ denoting their joint law. Then,
writing $\eta_\eps=\eps^{(d-2)/2}$ and $M_\eps(W)=M_{\eps,1}(W)$,
  $$
\begin{aligned}
	\E\big[u_\eps(0)^2\big]&= \E\bigg[\bigg\{ E_0 \exp\bigg(\beta 
	\eta_\eps M_{\eps}(W)\,\,- \frac {\beta^2 \eta_\eps^2}2\,\,
V_\eps(0)\bigg)\bigg\}^2\bigg] \\
&= \big(E_0\otimes E_0\big) \,\, \bigg[\E\bigg\{\exp\bigg({\beta 
\eta_\eps M_{\eps}(W)\,\,- \frac {\beta^2 \eta_\eps^2}2\,\, 
 V_\eps(0)\bigg)\,\,\exp\bigg(\beta 
\eta_\eps} M_{\eps}(W^\prime)\,\,- \frac {\beta^2 \eta_\eps^2}2\,\,
 V_\eps(0)\bigg)\bigg\} \bigg]
\\
&= \big(E_0\otimes E_0\big) 
\bigg[\exp\bigg\{\beta^2\eta_\eps^2  \int_0^1 
V_\eps(W_s-W^\prime_s)\d s\bigg\}\bigg] 
=E_0 \bigg[\exp\bigg\{\beta^2\eta_\eps^2  \int_0^1 V_\eps(\sqrt 2W_s)
\d s\bigg\}\bigg]\\
\end{aligned}
$$
where the third identity follows by \eqref{quadvar}. 
Hence, by \eqref{thursday1}, 
Brownian scaling and change of variables, we infer that
$$
\E\big[u_\eps^2(0)\big] = E_0 \bigg[\exp\bigg\{\beta^2  \int_0^{1/\eps^2} V(\sqrt 2W_s)\d s\bigg\}\bigg]
\leq E_0 \bigg[\exp\bigg\{\beta^2  \int_0^{\infty} 
V(\sqrt 2W_s)\d s\bigg\}\bigg].
$$
Since $V$ is a bounded function of compact support, 
it is easy to check that for $\beta$ small enough, 
\begin{equation}\label{P1}
\sup_{x\in \R^d} E_x\bigg\{\beta^2 \int_0^\infty V(W_s) \d s \bigg\} \leq \eta <1.
\end{equation}
Hence, by Portenko's lemma (\cite{P76}), 
\begin{equation}\label{P2}
\sup_{x\in \R^d} E_x\bigg[\exp\bigg\{\beta^2 \int_0^\infty V(W_s)  \d s \bigg\}\bigg] \leq \frac 1 {1-\eta}<\infty.
\end{equation}
This proves the lemma.
\end{proof}

\begin{remark} Let us remark that $u_\eps$ is not a Cauchy sequence 
	in $L^2(\P)$ (which is
	the reason why the convergence in distribution in Theorem
	\ref{thm1} cannot be upgraded to convergence in 
	probability). 
	A simple computation using \eqref{quadvar} shows that
  $$
\begin{aligned}
&\E\big[(u_\eps- u_\delta)^2\big]\\
&=  E_0\otimes E_0\bigg[
 \exp\bigg\{\beta^2\eta_\eps^2 \, \int_0^t V_{\eps}\big(W_s- W^\prime_s\big) \d s \bigg\} -
 \exp\bigg\{\beta^2\eta_\eps \eta_\delta \,
 \int_0^t V_{\eps,\delta}\big(W_s- W^\prime_s\big) \d s\bigg\}\bigg]
\\
&+E_0\otimes E_0\bigg[\exp\bigg\{\beta^2\eta_\delta^2 \, \int_0^t V_{\delta}\big(W_s- W^\prime_s\big) \d s \bigg\} -
\exp\bigg\{\beta^2\eta_\eps\eta_\delta
 \, \int_0^t V_{\eps,\delta}\big(W_s- W^\prime_s\big) \d s \bigg\}
 \bigg]
\end{aligned}
$$
 The difference of the two terms in the first line (and likewise, the second line) does not go to zero. For instance,
 if $\phi_\eps$ is a centered Gaussian mollifier with variance $\eps^2$, then in the first line, again by Brownian scaling, the
 second term (with the expectation) becomes (recall \eqref{thursday1})
  $$
(E_0\otimes E_0) \bigg[\exp\bigg\{\beta^2 \,
\frac{\eta_\eps\eta_\delta}{\eta_{\sqrt{\eps^2+\delta^2}}^2} 
\int_0^{t/(\eps^2+\delta^2)} V\big(W_s- W^\prime_s\big) \d s \bigg\}\bigg]
 $$
 while the first term becomes 
   $$
(E_0\otimes E_0) \bigg[ \exp\bigg\{\beta^2 \, \int_0^{t/\eps^2} V\big(W_s- W^\prime_s\big) \d s \bigg\}\bigg].
 $$
 From these expressions one can see that $\E \big[ (u_\varepsilon - u_\delta)^2 \big]$ does not vanish, e.g., in the iterated limit
 $\lim_{\varepsilon \to 0} \lim_{\delta \to 0}$.
 \end{remark}
 
We turn to the proof of Theorem \ref{thm1}.
 
{\bf{Proof of Theorem \ref{thm1}.}} Let us denote by $\widehat u_\eps(x)= u_\eps(x) - \E(u_\eps(x)) = u_\eps(x)- 1$ and $\widehat u_\eps(f)= \int_{\R^d} f(x) \widehat u_\eps(x) \, \d x $.
Then $\E\big(\widehat u_\eps(f)\big)=0$.
Note that, for the proof of the first part of Theorem \ref{thm1}, it suffices to show that 
\begin{equation}\label{thm1pf0}
\E\big[\widehat u_\eps(f)^2\big] \to 0
\end{equation}
as $\eps\to 0$. Let us prove this fact.
Exactly similar computations as in the proof of Lemma \ref{lemma1} imply that
\begin{equation}\label{thm1pf}
 \begin{aligned}
\E\big[\widehat u_\eps(f)^2\big]&= \int \int_{\R^d\times\R^d}  f(x) f(y) \E\big[u_\eps(x) u_\eps(y)\big] \d x \d y
- \bigg(\int_{\R^d} f(x)\d x\bigg)^2\\
&= \int \int_{\R^d\times\R^d}  f(x) f(y)  
E_{\frac{x-y}\eps}\, \bigg[\e^{\frac{1}{2} \beta^2 \int_0^{2/\eps^2} V(W_s) \d s }\bigg] 
\d x \d y- \bigg(\int_{\R^d}  f(x)\d x\bigg )^2
\end{aligned}
\end{equation}
If $z=(x-y)/\eps$, then,
  \begin{equation}\label{P3}
E_z\bigg[\int_0^\infty V(W_s) \, \d s \bigg] = C_d \int \d y \, \frac{V(y)}{|y-z|^{d-2}} \to 0 \quad \mbox{as}\, |z|\to \infty.
\end{equation}
By applying Portenko's lemma again (\cite{P76}), we see that for $\beta$ small enough
\begin{equation}\label{P4}
\sup_x E_x \left[ \e^{\frac{\beta^2}{2} \intop_0^\infty V(W_s) \d s } \right] < \infty.
\end{equation}
Together with \eqref{P3}, by Lebesgue's convergence theorem, for an even smaller $\beta$ we have
\begin{equation}\label{P5}
E_z \left[ \e^{\frac{\beta^2}{2} \intop_0^\infty V(W_s) \d s } \right] \to 1
\end{equation}
as $|z| \to \infty$.
Combining \eqref{thm1pf}, \eqref{P4} and \eqref{P5}, we use the bounded convergence theorem to conclude \eqref{thm1pf0}. This proves the first part of Theorem \ref{thm1}.

For the second part, 
note that
\eqref{eq-trep} implies that for fixed $\eps$,
$u_{\eps,1}(0)$ is equal in distribution to
$Z_\eps$. Since the process $\{Z_\eps\}_\eps$ is a positive martingale
(with respect to a filtration indexed by $1/\eps^2$), it converges almost 
surely to a limit $Z_\infty$.
By Lemma \ref{lemma1}, $Z_\eps$ is  (uniformly in $\eps$) $L^2(\P)$ 
bounded for $\beta$ small enough, and therefore
$Z_\infty$ does not vanish identically. By the $0-1$ law as in the
proof of Theorem \ref{theo-UT} (see \eqref{0-1}), we conclude that $P(Z_\infty=0)=0$.
We conclude that $u_\eps(0)$ converges in distribution to $Z_\infty$. Further,
since $u_\eps(x)\stackrel{d}{=}u_\eps(0)$ by translation invariance, the same
applies to $u_\eps(x)$.
\qed

\section{Proof of Theorem \ref{theo-UT} and
Corollary \ref{cor-beta}: Gaussian multiplicative chaos}\label{sec-theo-UT}
The starting point is the representation \eqref{eq-ofer1} for 
$Z_\eps=Z_{\eps}(0)$. For $d\geq 3$, which we assume throughout,
we will show that there is a $\beta^*>0$ such that for all $\beta>\beta^*$,
$Z_\eps\to 0$ in probability.

In order to prove this result, we represent $Z_\eps$ as a 
Gaussian Multiplicative Chaos (GMC), see \cite{Kahane,shamov} for background. Let
$\mathcal{E}=C_0([0,\infty); \mathbb{R}^d)$ and recall that
  $P_0$ denotes the standard 
  Wiener measure on $\mathcal{E}$ corresponding to the 
  $d$-dimensional Brownian motion $W=(W_t)_{t\geq 0}$. Set 
  $$
  \Lambda_\eps= \exp\bigg\{\beta \int_0^{\eps^{-2}} \int_{\R^d}
   \phi\big({y-W_s}\big)\dot B(s, \d y)\d s
   - \frac {\beta^2}{2\eps^2} V(0)\bigg\}
$$
 and recall that $Z_\eps\stackrel{(d)}{=}E_0 \Lambda_\eps$. 
 Introduce the random measure 
$M_\eps$ with $\d M_\eps=\Lambda_\eps \, \d P_0$ on
$\mathcal{E}$ and note that $Z_\eps=\int_{\mathcal{E}}M_\eps(\d W)$.

Introduce the event 
  $\mathcal{V}:=\{Z_\eps\not\to_{\eps\to 0} 0\}$. 
Since $\mathcal{V}$ is a tail event for the process
$t\to B(t,\cdot)$, one has 
\begin{equation}\label{0-1}
\P(\mathcal{V})\in \{0,1\}.
\end{equation}

Note that
$\eps^{-1}\mapsto Z_\eps$ is a 
strictly positive martingale of mean $1$. Introduce on
$\Omega \times \mathcal{E}$ the measure
$$
\d \Q_\eps := \Lambda_\eps \, \d (\P\otimes P_0).
$$
Let the measure $\overline \Q_\eps$ be its marginal on $\Omega$, i.e.
$\d \overline \Q_\eps=Z_\eps \d \P$.
\begin{lemma}
  \label{lem-increase}
  If the sequence $(Z_\eps)_\eps$ is uniformly integrable under $\P$, then
  under $\overline\Q_\eps$, $(Z_\eps)_\eps$
  is uniformly bounded in probability. In other words, 
$$
\lim_{m\to\infty}\sup_\eps \overline\Q_\eps(Z_\eps>m)=0.
$$
\end{lemma}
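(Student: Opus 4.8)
The plan is to recognize that, once the definition of $\overline\Q_\eps$ is unwound, the claimed tightness is literally the statement of uniform integrability of $(Z_\eps)_\eps$ under $\P$, specialized to the events $\{Z_\eps>m\}$. So the proof should be a two-line change-of-measure computation, and the real work has already been done in setting up the tilted measure.

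First I would record the change-of-measure identity. Since $\d\overline\Q_\eps=Z_\eps\,\d\P$ and $Z_\eps\geq0$ is an honest $\mathcal F$-measurable random variable on $\Omega$ (indeed $\mathcal G_{\eps^{-2}}$-measurable), while $\overline\Q_\eps$ is a measure on $\Omega$, for every level $m>0$ and every $\eps$ we may test $\overline\Q_\eps$ against the $\eps$-dependent set $\{Z_\eps>m\}$ to get
$$
\overline\Q_\eps(Z_\eps>m)=\int_{\{Z_\eps>m\}}Z_\eps\,\d\P=\E\big[Z_\eps\,\1\{Z_\eps>m\}\big].
$$
Because $Z_\eps$ has mean one under $\P$, $\overline\Q_\eps$ is a probability measure, so the left-hand side is a genuine probability and "uniform boundedness in probability" is the right notion.

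The second and final step is to invoke the hypothesis. Since $Z_\eps\geq0$, the assumption that $(Z_\eps)_\eps$ is uniformly integrable under $\P$ is, by definition, exactly
$$
\lim_{m\to\infty}\ \sup_\eps\ \E\big[Z_\eps\,\1\{Z_\eps>m\}\big]=0.
$$
Combining the two displays yields $\lim_{m\to\infty}\sup_\eps\overline\Q_\eps(Z_\eps>m)=0$, which is the assertion.

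Since the argument is a direct unwinding of definitions, I do not expect any genuine obstacle: no use of the martingale convergence, of the $0$--$1$ law \eqref{0-1}, or of the GMC structure is needed beyond the single relation $\d\overline\Q_\eps=Z_\eps\,\d\P$. The one point worth flagging in the writeup is conceptual rather than technical, namely that tilting $\P$ by the mean-one martingale $Z_\eps$ converts the $L^1$ uniform-integrability tail of $(Z_\eps)_\eps$ into a tightness statement for the size-biased family under $\overline\Q_\eps$; this is precisely the mechanism that the subsequent arguments exploit.
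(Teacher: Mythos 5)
Your proof is correct, and it is actually more direct than the one in the paper. You observe that $\overline\Q_\eps(Z_\eps>m)=\E[Z_\eps\1\{Z_\eps>m\}]$ is \emph{identically} the uniform-integrability tail functional, so the lemma reduces to the definition of uniform integrability for the nonnegative family $(Z_\eps)_\eps$. The paper instead routes through the de la Vall\'ee-Poussin criterion: it extracts a convex increasing $h$ with $h(x)/x\to\infty$ and $\sup_\eps\E h(Z_\eps)=C<\infty$, rewrites this as $\int \frac{h(Z_\eps)}{Z_\eps}\,\d\overline\Q_\eps\le C$, and concludes by a Markov-type bound $\overline\Q_\eps(Z_\eps>m)\le C/\inf_{x>m}(h(x)/x)$. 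Both arguments are short and elementary, but yours has the advantage of exhibiting the exact equality $\overline\Q_\eps(Z_\eps>m)=\E[Z_\eps\1\{Z_\eps>m\}]$, which makes the ``if and only if'' nature of the statement (noted in the remark following the lemma) completely transparent, whereas the paper's route only gives the stated implication without further comment. The only point to be careful about is which definition of uniform integrability one adopts; with the standard one, $\lim_{m\to\infty}\sup_\eps\E[|Z_\eps|\1\{|Z_\eps|>m\}]=0$, your two displayed lines already constitute a complete proof.
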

\begin{proof}
Assume that $Z_\eps$ is uniformly integrable. Then, by the 
la Vall\'{e}e-Poussin theorem, there exists a 
convex increasing function 
$h:\R_+\to\R_+$, such that $h(x) / x \to \infty, x \to \infty$ and
$\sup_\eps \E h(Z_\eps)=C<\infty$. Then,
$$C\geq \E h(Z_\eps)=\int \frac{h(Z_\eps)}{Z_\eps}d\overline\Q_\eps \,.$$
The conclusion follows.
\end{proof}
\begin{remark} The implication in Lemma \ref{lem-increase} is an ``if and only if" statement; 
we only stated the direction that we need. 
\end{remark}

  Another preparatory  step that we need is the following proposition, whose
  statement and proof closely follow \cite[Prop. 3.1]{CY06}.
  \begin{prop}
    \label{prop-CY06}
    The sequence $\{Z_\eps\}$ is uniformly integrable under $\P$ if and only if
    $\P(\mathcal{V})=1$.
\end{prop}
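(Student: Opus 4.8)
The plan is to identify uniform integrability of $\{Z_\eps\}$ with the non-degeneracy of its almost sure limit, and to read the latter off from the zero-one law \eqref{0-1}. First I record that, since $\eps^{-1}\mapsto Z_\eps$ is a nonnegative $\P$-martingale of mean $1$, it converges $\P$-a.s. to a limit $Z_\infty\in[0,\infty)$, and that $\mathcal{V}=\{Z_\infty>0\}$ up to a $\P$-null set. The direction ``$\{Z_\eps\}$ uniformly integrable $\Rightarrow\P(\mathcal{V})=1$'' is then immediate: uniform integrability upgrades the a.s. convergence to $L^1(\P)$-convergence, so $\E Z_\infty=\lim_\eps\E Z_\eps=1$, whence $\P(Z_\infty>0)>0$; since $\mathcal{V}$ is a tail event, \eqref{0-1} forces $\P(\mathcal{V})=1$.

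For the converse I would work with the tilted measure $\overline\Q_\eps=Z_\eps\,\d\P$ and its projective limit $\overline\Q$ on the $B$-field. Two observations drive the argument. First, $\{Z_\eps\}$ is uniformly integrable under $\P$ exactly when $\overline\Q\ll\P$, equivalently when the absolutely continuous part in the Lebesgue decomposition of $\overline\Q$ with respect to $\P$ carries all the mass, i.e. $\overline\Q(Z_\infty<\infty)=1$; by Lemma \ref{lem-increase} and the (stated) converse implication this is the same as $\{Z_\eps\}$ being bounded in probability under $\overline\Q$. Second, since $Z_\eps>0$ $\P$-a.s., the reciprocal $1/Z_\eps$ is a nonnegative $\overline\Q$-martingale of mean $1$, and a short computation with the same decomposition gives $\E_{\overline\Q}[1/Z_\infty]=\P(Z_\infty>0)=\P(\mathcal{V})$, so that $\P(\mathcal{V})=1$ is equivalent to $\P\ll\overline\Q$. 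The proposition therefore amounts to the dichotomy that $\P\ll\overline\Q$ forces $\P\sim\overline\Q$, equivalently $\E Z_\infty\in\{0,1\}$.

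The main obstacle is precisely this upgrade, and the tail zero-one law \eqref{0-1} alone does not supply it: for a generic nonnegative martingale one may have $Z_\infty>0$ a.s. yet $\E Z_\infty<1$ (for instance $Z=\frac12+\frac12 Y$ with $Y$ a non-uniformly-integrable martingale with $Y_\infty=0$), so some genuine structure of $Z_\eps$ must enter. The structure I would exploit is the multiplicative ``restart'' property inherited from the Feynman--Kac representation \eqref{eq-ofer1}: splitting the time integral at a finite horizon and using the independence of the white-noise increments over disjoint time intervals, $Z_\infty$ decomposes as a spatial superposition of conditionally independent fresh copies of itself, the same mechanism that underlies the tail-measurability of $\mathcal{V}$. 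Following \cite[Prop. 3.1]{CY06}, I would realise $\overline\Q$ as the law of the environment size-biased along a distinguished polymer path, under which $1/Z_\eps=\d\P/\d\overline\Q$, and deduce from the restart decomposition a zero-one law for $\{Z_\infty=\infty\}$ under $\overline\Q$, so that $\overline\Q(Z_\infty<\infty)\in\{0,1\}$. Combined with $\E_{\overline\Q}[1/Z_\infty]=\P(\mathcal{V})$, the case $\P(\mathcal{V})=1$ rules out $\overline\Q(Z_\infty=\infty)=1$ and hence yields $\overline\Q(Z_\infty<\infty)=1$, i.e. uniform integrability. I expect the delicate point to be making the size-biasing construction and the attendant zero-one law rigorous in the continuum white-noise setting, rather than the measure-theoretic bookkeeping around the Lebesgue decomposition.
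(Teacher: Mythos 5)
Your forward implication is fine and matches the paper's. The reverse implication, however, is a plan rather than a proof, and the step you defer is exactly the mathematical content of the proposition. Your reduction via the Lebesgue decomposition is correct as far as it goes: with $\overline\Q$ the projective limit of the tilted measures, uniform integrability of $\{Z_\eps\}$ under $\P$ is equivalent to $\overline\Q(\lim Z_\eps<\infty)=1$, while $\P(\mathcal{V})=1$ is equivalent to $\E_{\overline\Q}[1/Z_\infty]=1$ and hence only to $\overline\Q(\lim Z_\eps<\infty)>0$. To close the argument you need the zero-one law $\overline\Q(\lim Z_\eps=\infty)\in\{0,1\}$, and you merely assert that it should follow from a ``restart decomposition''. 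This does not come for free from \eqref{0-1}: that statement concerns $\P$, under which the increments of $t\mapsto B(t,\cdot)$ over disjoint time blocks are independent, whereas under $\overline\Q$ (the size-biased measure with the polymer path integrated out) those increments are coupled through $W$, and tail triviality does not transfer across measures that are only locally absolutely continuous. As you yourself observe, without such an input the desired dichotomy fails for general positive martingales, so the missing step is not cosmetic. (A secondary point: you lean on the unproved converse of Lemma \ref{lem-increase}; it is true but not established in the paper, and your argument can in fact be phrased without it.)

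The paper closes the hard direction by a different and more economical device, following \cite[Prop.~3.1]{CY06}, which never leaves the measure $\P$. Writing $\bar Z_n$ for the partition function at time horizon $n$ and $\bar Z_\infty$ for its a.s.\ limit, one sets $X_{n,z}=\bar Z_\infty(\theta_{n,z}B)/\E\bar Z_\infty$; the hypothesis $\P(\mathcal{V})=1$ enters only to guarantee $\E\bar Z_\infty>0$. Factorizing the Feynman--Kac weight over unit time blocks and applying Fatou and the Markov property yields $\E(X_{0,0}\,|\,\mathcal{G}_n)\geq \bar Z_n\geq 0$; since $X_{0,0}$ is a single integrable random variable, its conditional expectations form a uniformly integrable family, and a nonnegative family dominated by a uniformly integrable one is itself uniformly integrable. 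If you wish to salvage your route, the honest course is to prove the zero-one law under $\overline\Q$ directly (as is done for the discrete polymer in the Comets--Shiga--Yoshida line of work), but that requires essentially the same block decomposition in any case, so the domination argument is the shorter path.
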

  \begin{proof}
    If $\{Z_\eps\}$ is uniformly integrable under $\P$ then its limit is necessarily non degenerate, i.e. $\P(\mathcal{V})>0$. Then,  $\P(\mathcal{V})=1$ by 
    \eqref{0-1}.

    To prove the reverse implication,
    recall the random variables  $Z_\eps(x)$ (with $x\in \R^d$), 
    see \eqref{eq-ofer1}. With
    $t=1/\eps^2$, we write $\bar Z_t(x)=Z_\eps(x)$. It is enough
    to prove the uniform integrability for the sequence 
    $\bar Z_n(0)$.
    Following \cite{CY06}, 
    Let $\bar Z_\infty(B)$ denote the limit of $\bar Z_n(0)$ (which exists a.s.)
    and, for $z\in \R^d$,
    let $X_{n,z}=\bar Z_\infty(\theta_{n,z} B)/\E \bar Z_\infty$, where
    $\theta_{n,z}$ denote the temporal (by $n$) and spatial (by $z$) 
    shift of $B$.
    Set, for $x,z\in\R^d$,
    $$ e_{n,x,z}(B)= 
    E_x\left(
    \exp \left\{ \beta\int_{0}^{1}\int_{\R^d} 
    \phi(y-W_{s})\dot{B}(s+n-1, \d y) \d s-\frac{\beta^2 V(0)}{2}\right\}\big|
    W_{1}=z\right).$$
    We have that 
    $\E X_{n,z} =1$ and $X_{n,x}\geq E_x(e_{n+1,x,W_1}\cdot X_{n+1,W_1})$ 
    by Fatou.
Denote by 
$\mathcal{G}_t$ the natural filtration
induced by
$t\to B(t,\cdot)$.
    By construction, $X_{n,\cdot}$ is independent
    of $\mathcal{G}_n$, and $\E( X_{n,z}|\mathcal{G}_n)=\E X_{n,z}=1$ . Now,
    iterating, we get by the Markov property
    $$X_{0,0}\geq 
    E_0(e_{1,0,W_1}e_{2,W_1,W_2}\cdots e_{n,W_{n-1},W_n}X_{n,W_n})\,.$$
    Thus,
    $$\E(X_{0,0}|\mathcal{G}_n)\geq 
    E_0(e_{1,0,W_1}e_{2,W_1,W_2}\cdots e_{n,W_{n-1},W_n})=\bar Z_n\,.$$
    It follows that the sequence $\bar
    Z_n$ is uniformly integrable under $\P$.
 \end{proof}
 \begin{remark} 
     An alternative proof of Proposition \ref{prop-CY06} can be obtained
     by using \cite[Thm. 2]{KaCh} and an appropriate 0-1 law with respect to the
   Brownian path $W$.
 \end{remark}
The following proposition is the heart of the proof of Theorem \ref{theo-UT}.
\begin{prop}
  \label{prop-UT}
  There exists $\beta^*$ such that for $\beta>\beta^*$ and any $m>0$,
  $$ 
  \overline \Q_\eps(Z_\eps>m)\to_{\eps \to 0} 1. 
  $$ 
\end{prop}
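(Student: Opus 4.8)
The plan is to prove the equivalent assertion that, under the tilted measure $\overline\Q_\eps$, the total mass $Z_\eps$ diverges in probability for $\beta$ large. Since $\overline\Q_\eps$ is the $\Omega$-marginal of $\Q_\eps$ and $Z_\eps$ is $\Omega$-measurable, $\overline\Q_\eps(Z_\eps>m)=\Q_\eps(Z_\eps>m)$, so it suffices to show $Z_\eps\to\infty$ in $\Q_\eps$-probability. The main tool is the spine decomposition carried by $\Q_\eps$. Because $\Lambda_\eps$ is, for each fixed $W$, the Wick exponential of the Gaussian field $\beta\int_0^{\eps^{-2}}\!\int\phi(y-W_s)\dot B(s,\d y)\d s$, the $\mathcal E$-marginal of $\Q_\eps$ is exactly $P_0$ (as $\E\Lambda_\eps=1$), and the Cameron--Martin theorem shows that under $\Q_\eps$ one may write $\dot B(s,\d y)=\dot B^0(s,\d y)+\beta\,\phi(y-W_s)\mathbf{1}_{[0,\eps^{-2}]}(s)\,\d s\,\d y$, where $W$ (the spine) has law $P_0$ and $B^0$ is a white noise independent of $W$. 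In words, size-biasing by $Z_\eps$ selects a Brownian path $W$ from $M_\eps/Z_\eps$ and boosts the noise by $\beta\phi(\cdot-W_\cdot)$ along it.

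Substituting this shift into $Z_\eps=E_0^{W'}[\Lambda_\eps(W')]$ (with $W'$ an independent Brownian path, see \eqref{eq-ofer1}) and using $\int_{\R^d}\phi(y-W_s)\phi(y-W'_s)\,\d y=(\phi\star\phi)(W_s-W'_s)=V(W_s-W'_s)$, the cross term produced by the shift yields, under $\Q_\eps$,
\[
Z_\eps\;=\;E_0^{W'}\Big[\Lambda_\eps^0(W')\,\exp\Big\{\beta^2\!\int_0^{\eps^{-2}}\!V(W_s-W'_s)\,\d s\Big\}\Big],
\]
where $\Lambda_\eps^0(W')$ is the Wick exponential built from the fresh noise $B^0$. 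The factor $\exp\{\beta^2\int_0^{\eps^{-2}}V(W_s-W'_s)\d s\}$ is the engine of the proof: a path $W'$ that shadows the spine $W$ accumulates a gain close to $\exp\{\beta^2\eps^{-2}V(0)\}$, since $V(0)=\|\phi\|_2^2>0$.

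For the lower bound I would apply an entropy (Jensen) inequality with a comparison law $\rho=\rho(W)$ under which $W'$ is an Ornstein--Uhlenbeck process pulled toward the spine, $\d W'_s=-\gamma(W'_s-W_s)\,\d s+\d B'_s$, with $\gamma$ fixed. Writing $t=\eps^{-2}$ and $R_t=\int_0^t V(W_s-W'_s)\,\d s$, this gives
\[
\log Z_\eps\;\ge\;\beta^2\!\int R_t\,\d\rho+\int\log\Lambda_t^0\,\d\rho-H(\rho\mid P_0).
\]
The reinforcement term is deterministic and equals $\beta^2 t\,\bar v$ with $\bar v=\int V(z)\,p_{1/\gamma}(z)\,\d z\to V(0)$ as $\gamma\to\infty$; the term $\int\log\Lambda_t^0\,\d\rho$ contributes the Wick penalty $-\tfrac{\beta^2}{2}tV(0)$ plus a centred Gaussian (in $B^0$) of variance $\le\beta^2 tV(0)$, hence of size $O(\sqrt t)=o(t)$ with probability tending to one; and $H(\rho\mid P_0)=\tfrac{d\gamma}{2}t$ is the tracking (Girsanov) cost. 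Choosing $\gamma$ fixed so that $\bar v>\tfrac34 V(0)$, the rate is at least $\tfrac{\beta^2}{4}V(0)-\tfrac{d\gamma}{2}$, strictly positive once $\beta>\beta^*:=(2d\gamma/V(0))^{1/2}$. Thus $\tfrac1t\log Z_\eps$ exceeds a positive constant with $\overline\Q_\eps$-probability tending to $1$, whence $Z_\eps\to\infty$ and $\overline\Q_\eps(Z_\eps>m)\to1$ for every $m$.

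The heart of the matter, and the main obstacle, is the competition between the shadowing gain and the Wick renormalization. The penalty $-\tfrac{\beta^2}{2}tV(0)$ is precisely what drives $Z_\eps\to0$ under $\P$; the point is that a path tracking the spine gathers reinforcement up to $\beta^2 tV(0)$, twice the penalty, leaving a surplus $\tfrac{\beta^2}{2}tV(0)$ that must dominate both the entropy cost of the tracking and the fluctuations of the fresh noise. Making the Cameron--Martin decomposition rigorous for the space-time white noise, and controlling these three linear-in-$t$ contributions simultaneously so that the surplus survives for all large $\beta$, is the crux; the noise fluctuations are the only random ingredient and are handled by their Gaussian tail, which is what upgrades the conclusion from convergence in mean to convergence in probability.
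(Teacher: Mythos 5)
Your proposal is correct in outline but takes a genuinely different route from the paper in its second half. The first step is identical: both arguments start from the Cameron--Martin observation that under $\Q_\eps$ the noise acquires the drift $\beta\phi(\cdot-W_\cdot)$ along a spine $W\sim P_0$, so that $Z_\eps$ becomes $\int \Lambda_\eps^0(W')\e^{\beta^2 K_\eps(W,W')}\,\d P_0(W')$ with $K_\eps(W,W')=\int_0^{t}V(W_s-W'_s)\,\d s$. From there the paper restricts to the hard tube event $\{\tau_\delta(W,W')\geq t\}$, invokes Kahane's comparison inequality (for concave increasing $f$, against the constant kernel $V(0)t$) to collapse the correlated chaos on the tube to a single Gaussian mass, and then needs the quenched small-ball estimate $P_0^{\otimes 2}(\tau\geq t\mid W)\geq \chi(W)\e^{-\kappa t}$ of Lemma \ref{lem-tau}, whose proof requires Onsager--Machlup, Kingman's subadditive ergodic theorem and Borell--TIS. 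You instead soften the conditioning into an absolutely continuous Ornstein--Uhlenbeck tilting of $W'$ toward the spine and apply the Gibbs/entropy variational lower bound to $\log Z_\eps$ directly; the entropy $H(\rho\mid P_0)$ plays the role of the paper's small-ball exponent $\kappa t$, and the Gaussian-tail control of the fresh-noise term $\int\log\Lambda^0_t\,\d\rho$ replaces Kahane's inequality and the truncated concave $f_\alpha$. Both schemes produce the same bookkeeping (gain close to $\beta^2V(0)t$, Wick penalty $\tfrac12\beta^2V(0)t$, a $\beta$-independent tracking cost), and yours arguably yields a more explicit $\beta^*$ while avoiding GMC comparison theorems. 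One caveat: your claim that the reinforcement term $\beta^2\int R_t\,\d\rho$ and the entropy $H(\rho\mid P_0)$ are \emph{deterministic} is not accurate --- both are functionals of $W$, since under your $\rho$ the difference $X_s=W'_s-W_s$ is Gaussian with a $W$-dependent mean $-\int_0^s\e^{-\gamma(s-u)}\d W_u$; your stated values are only their $W$-averages. This is fixable (both are additive functionals of an exponentially mixing process, so a second-moment/ergodic argument gives concentration at scale $o(t)$ in $P_0$-probability), but it is an extra step your write-up elides, parallel in difficulty to the paper's Lemma \ref{lem-tau}.
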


We first complete the proof of Theorem \ref{theo-UT}, and then provide 
the proof of Proposition \ref{prop-UT}.

\noindent
{\bf{Proof of Theorem \ref{theo-UT}} (assuming Proposition \ref{prop-UT}):}
Assume that
$Z_\eps$ does not converge to $0$ almost surely. Then, 
  by Proposition \ref{prop-CY06},
it is uniformly integrable and, by Lemma 
\ref{lem-increase}, it is uniformly bounded in probability under $\overline \Q_\eps$. In particular, there exists $K>0$ such that
$\overline\Q_\eps(Z_\eps>K)<1/2$. This contradicts
Proposition \ref{prop-UT}.
\qed

Before providing the proof of Proposition \ref{prop-UT}, we need to introduce some notation and prove some preparatory lemmas.
Introduce 
the stopping times $\tau_\delta(W,W')=\inf\{t>0: |W_t-W'_t|\geq \delta\}$.
%
We need
an estimate on the tail of $\tau := \tau_\delta$ conditionally on $W$, presented in the
next lemma; in its statement and in 
its proof, $P_0^{\otimes 2}$ denotes
the measure $P_0\otimes P_0$ on $(W,W')$
\begin{lemma}\label{L4.4}
  \label{lem-tau}
  There exists a random variable $\chi=\chi(W)$ and a constant $\kappa>0$, such that for 
  $t$ large enough,
  $$
  P_0^{\otimes 2}
  \big(\tau\geq t|W\big) \geq \chi(W)e^{-\kappa t}\,.$$
\end{lemma}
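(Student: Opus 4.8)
The plan is to condition on $W$ throughout, reduce to integer times, and build the lower bound multiplicatively over unit intervals, exploiting that the increments of $W$ over disjoint intervals are i.i.d.

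First I would take $t=N\in\N$ and, writing $G_n=\{|W'_n-W_n|<\delta/4\}$, restrict to the sub-event on which $W'_n\in G_n$ at every integer $n\le N$ and $W'$ stays within $\delta$ of $W$ on each $[n,n+1]$; this sub-event forces $\tau\ge N$. Applying the Markov property of $W'$ at the integer times and conditioning successively from $n=N-1$ down to $n=0$ yields
$$ P_0^{\otimes 2}\big(\tau\ge N\mid W\big)\ \ge\ \prod_{n=0}^{N-1} q_n, $$
where $q_n$ is any lower bound, \emph{uniform over starting points $y$ with $|y-W_n|<\delta/4$}, for the conditional probability that a Brownian motion started at $y$ stays within $\delta$ of $W$ on $[n,n+1]$ and lands in $G_{n+1}$. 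The uniformity over $y$ is what makes the successive conditioning collapse to a product, and the key point is that $q_n$ can be taken to depend only on the increment path $u\mapsto W_{n+u}-W_n$, $u\in[0,1]$, so that the $q_n$ are i.i.d.\ under $P_0$.

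To produce such a $q_n$ I would use Cameron--Martin. Fix a $C^2$ curve $\tilde g_n$ with $\tilde g_n(0)=0$ that approximates the segment $u\mapsto W_{n+u}-W_n$ to within $\delta/8$ in sup-norm; shifting the Brownian path by $-\tilde g_n$ and integrating by parts in the resulting stochastic integral (using that on the relevant event the shifted path is confined to a ball of radius $\delta$) gives
$$ q_n\ \ge\ c_0\,\exp\Big\{-\tfrac12 I_n-R_n\Big\},\qquad I_n=\int_0^1|\dot{\tilde g}_n|^2\,\d u, $$
with $c_0=c_0(\delta,d)>0$ from a small-ball estimate and $R_n$ a boundary term controlled by $|\dot{\tilde g}_n(1)|$ and $\int_0^1|\ddot{\tilde g}_n|\,\d u$. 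The main obstacle, and the heart of the lemma, is to show $E_0[\log(1/q_0)]<\infty$, i.e.\ that the \emph{energy} of approximating a Brownian segment to fixed accuracy is integrable. For this I would take $\tilde g_0$ to be a mollification of the piecewise-linear interpolation of $W$ at mesh $1/K^\ast$, where $K^\ast$ is the least $K$ for which that interpolation is already within $\delta/16$ of $W$. A Brownian-bridge fluctuation estimate gives $P_0(K^\ast>K)\le C\,K\,e^{-cK\delta^2}$, so $K^\ast$ has all moments finite; together with the Gaussian tail of $\omega_0=\sup_{u\le 1}|W_u|$ and the crude bounds $I_0\lesssim (K^\ast)^2\omega_0^2$ and $R_0\lesssim (K^\ast)^2\omega_0$, Cauchy--Schwarz yields $E_0[I_0+R_0]<\infty$, hence $\kappa_0:=E_0[\tfrac12 I_0+R_0]+\log(1/c_0)<\infty$.

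Finally, since the $q_n$ are i.i.d.\ with $E_0[\log(1/q_0)]=\kappa_0<\infty$, the strong law of large numbers gives $\frac1N\sum_{n<N}\log(1/q_n)\to\kappa_0$ a.s., so for any fixed $\kappa>\kappa_0$ one has $\prod_{n<N}q_n\ge e^{-\kappa N}$ for all $N\ge N_0(W)$. Passing from integer to real $t$ by monotonicity of $t\mapsto\{\tau\ge t\}$, and absorbing both the factor $e^{-\kappa}$ and the transient $t\le N_0(W)$ into a strictly positive random prefactor $\chi(W)$ — positive because $P_0^{\otimes 2}(\tau\ge t\mid W)>0$ for every finite $t$ and a.e.\ $W$ — gives the claimed bound $P_0^{\otimes 2}(\tau\ge t\mid W)\ge\chi(W)e^{-\kappa t}$.
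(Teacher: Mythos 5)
Your proof is correct in outline, but it takes a genuinely different route from the paper's. The paper also starts from Cameron--Martin, but then bypasses any explicit construction: Jensen's inequality plus the symmetry of the centered tube event $\{\|W'\|_{\infty,t}\le\delta/2\}$ yields $P_0^{\otimes2}(\tau\ge t\mid W)\ge e^{-Y_{0,t}/2}\,P_0(\|W'\|_{\infty,t}\le\delta/2)$, where $Y_{0,t}$ is the \emph{optimal} energy of a path tracking $W$ within $\delta/2$; subadditivity of $Y$ and Kingman's theorem give $Y_{0,t}/t\to\kappa_2$, the finiteness of $\kappa_2\le EY_{0,1}$ follows from the Borell--Tsirelson-Ibragimov-Sudakov inequality applied to the Lipschitz functional $\sqrt{Y_{0,1}}$, and the small-ball factor contributes the Dirichlet eigenvalue $-\lambda_1$. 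You replace Kingman by a block decomposition into i.i.d.\ unit increments plus the ordinary strong law, and you replace Borell--TIS by an explicit competitor (mollified piecewise-linear interpolation at a random mesh $K^\ast$) whose energy you control by moment bounds. What the paper's route buys is brevity: Jensen removes both the integration by parts and the need for any approximating curve, and concentration gives $EY_{0,1}<\infty$ with no construction. What your route buys is elementarity (no subadditive ergodic theorem, no Gaussian concentration) and an explicit, in-principle quantitative $\kappa_0$. The one place in your argument that needs more care than you give it is the endpoint bookkeeping in the block decomposition: requiring $W'_{n+1}\in G_{n+1}$ with radius $\delta/4$ uniformly over starting points $y$ with $|y-W_n|<\delta/4$ means your per-block quantity is really $\inf_{|v|<\delta/4}$ of a probability whose terminal target is the shifted ball $B(W_{n+1}-W_n-v,\delta/4)$; this infimum is still a positive functional of the increment path alone (so the i.i.d.\ structure survives), and the Cameron--Martin bound adapts by appending a linear correction of slope at most $\delta/4$ to $\tilde g_n$, but as written the tolerances do not literally telescope and this step should be spelled out.
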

\begin{proof}
Define
\begin{equation}\label{kappa1}
  \kappa_1 =\liminf_{t\to\infty} \frac1t\log  P_0^{\otimes 2}
  \big(\tau\geq t|W\big).
\end{equation}
Note that since $\kappa_1$ is measurable with respect to the tail $\sigma$-field
of $W^\prime$, it is deterministic, possibly equal to
$-\infty$. We will show that $\kappa_1>-\infty$. Taking then $\kappa= - 2\kappa_1$
then proves the lemma. 

With $|\cdot|$ denoting the Euclidean norm in $\R^d$, let
$$
W^{1,2}_t=\bigg\{\varphi: \varphi(0)=0, \int_0^t |\dot{\varphi}(s)|^2 \d s<\infty\bigg\},
$$
where
$\dot{\varphi}$ denotes the time-derivative of $\varphi$. 
We also use the notation $\|\varphi\|_{\infty,t}=\sup_{s\in [0,t]} |\varphi(s)|$.
Fix a (possibly random, but independent of $W'$) function $\varphi\in W^{1,2}_t$.
Then, by an application of the Cameron-Martin theorem in classical Wiener space,
\begin{eqnarray}
  \nonumber
  P_0(\|W^\prime-\varphi\|_{\infty,t}\leq \delta/2)&=&
  \int \e^{\int_0^t \dot{\varphi}(s)\d W^\prime(s)-\frac12 \int_0^t |\dot{\varphi}(s)|^2 \d s}  
  {\1}_{\{\|W^\prime\|_{\infty,t}\leq \delta/2\}} \d P_0(W^\prime)\\
  &=&
  \nonumber
 \e^{-\frac12 \int_0^t |\dot{\varphi}(s)|^2 \d s}  
  \int \e^{\int_0^t \dot{\varphi}(s) \d W^\prime(s) }  
  {\1}_{\{\|W^\prime\|_{\infty,t}\leq \delta/2\}} \d P_0(W^\prime)\\
  &=&
  \nonumber
  \e^{-\frac12 \int_0^t |\dot{\varphi}(s)|^2 \d s}  \,\,
  P_0\big(\|W'\|_{\infty,t}\leq \delta/2\big)
  \,\,  E_0\bigg[ \e^{\int_0^t \dot{\varphi}(s)\, \d W^\prime(s) } 
    \big| \big\{\|W^\prime\|_{\infty,t}\leq \delta/2\big\}\bigg]
  \\
  &\geq & 
  e^{-\frac12 \int_0^t |\dot{\varphi}(s)|^2 \d s}  
  P_0\big(\|W'\|_{\infty,t}\leq \delta/2\big),
  \label{eq-OM}
\end{eqnarray}
where the last inequality used Jensen's inequality and invariance of
the set $\|W'\|_{\infty,t}\leq \delta/2$ 
with respect to the map $W'\mapsto -W'$.

Introduce the random field
$$ 
Y_{s,t}(W)=
\inf\bigg\{\int_s^t |\dot{\varphi}(u)|^2 \,\, \d u: \varphi(s)=W_s,\varphi(t)=W_t, 
\sup_{u\in [s,t]} |W(u)-\varphi(u)|\leq \delta/2\bigg\}.
$$
Since $Y$ is subadditive in the sense that
$Y_{s,t}\leq Y_{s,u}+Y_{u,t}$
for $u\in (s,t)$, Kingman's  subadditive ergodic theorem 
implies that
\begin{equation}\label{kappa2}
  t^{-1} Y_{0,t} \to_{t\to \infty} \kappa_2,\quad a.s.
\end{equation}
for a deterministic $\kappa_2$. 
We claim that $\kappa_2$ is finite. This follows from
the fact that $\kappa_2$ is smaller
than $EY_{0,1}$; since $Y_{0,1}$ is finite almost surely and 
$X:=\sqrt{Y_{0,1}}$ is Lipshitz as a map on $\mathcal{E}$,
denoting by $\mbox{\rm med} (X)$ the median of $X$
we have by the Borell--Tsirelson-Ibragimov-Sudakov inequality 
\cite{AT07}
that $X-\mbox{\rm med}(X)$ possesses Gaussian tails, and
therefore $E X^2=EY_{0,1}<\infty$.

We can now conclude. Let $\varphi^{(t)}=\varphi^{(t)}(W)$ be such that
$\varphi^{(t)}(0)=0$, $\varphi^{(t)}(t)=W(t)$  and
$Y_{0,t}=\int_0^t |\dot{\varphi}(s)|^2 ds$. (Such $\varphi^{(t)}$ exists by
lower-semicontinuity of the $L^2$ norm, although this is not essential to our argument and we could just assume that the last integral is smaller than 
$2Y_{0,t}$.) We have, by \eqref{eq-OM},
$$
\begin{aligned}
  P^{\otimes 2}_0\big(\tau\geq t |W \big)=
  P^{\otimes 2}_0\big(\|W'-W\|_{\infty,t}\leq \delta |W\big)
  &\geq P^{\otimes 2}_0\big(\|W'-\varphi^{(t)}\|_{\infty,t}\leq \delta/2\big) \\
&\geq 
\e^{-\frac12 Y_{0,t} }
P_0(\|W^\prime\|_{\infty,t}\leq \delta/2\big) .
\end{aligned}
$$
Thus, by \eqref{kappa1} and \eqref{kappa2},
\begin{equation*}
  \kappa_1 =\liminf_{t\to\infty} \frac1t\log P^{\otimes 2}(\tau\geq t|W)
  \geq 
  -\frac {\kappa_2} 2
  +
  \lim_{t\to\infty} \frac1t\log 
  P_0(\|W'\|_{\infty,t}\leq \delta/2). 
\end{equation*}
The last probability on the right hand side is $P_0(\sigma >t)$,
where $\sigma$ denotes the first exit time of the standard Brownian motion $W^\prime$
from the ball of radius $\delta/2$ around the origin. It is well-known (for example, by the spectral theorem
for $-\frac 12 \Delta$) that $\lim_{t\to\infty}\frac 1t \log P_0\{\sigma>t\}=-\lambda_1$, where $\lambda_1>0$ is the
principal eigenvalue of $-\frac 12 \Delta$ with Dirichlet boundary conditions
on the same ball. It follows that $\kappa_1 >-\infty$ and
Lemma \ref{L4.4} is proved.
\end{proof}
Henceforth, we set  $t=\eps^{-2}$.
Next, on $\mathcal{E}\times \mathcal{E}$, introduce the kernels 
$$
K_\eps(W,W')=\int_0^{1/\eps^2}\int_{\R^d} \phi(x-W_s)\phi(x-W'_s) \d x \, \d s.
$$
Note 
that $K_\eps(W,W')\leq V(0) t$.
\begin{lemma}
  \label{lem-comp}
  There exists $\delta>0$ such that on the event
  $\{\tau_\delta(W,W')\geq t\}$, one has $K_\eps(W,W')\geq 2V(0)t/3$.
\end{lemma}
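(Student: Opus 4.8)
The plan is to reduce the kernel $K_\eps$ to a single path integral of $V$ evaluated along the difference process $W_s-W'_s$, and then bound the integrand from below using the continuity of $V$ at the origin together with the confinement guaranteed by the stopping-time event. First I would perform the spatial integration in the definition of $K_\eps$: for fixed $s$, substituting $y=x-W_s$ and using that $\phi$ is even gives
$$
\int_{\R^d}\phi(x-W_s)\,\phi(x-W'_s)\,\d x
=\int_{\R^d}\phi(y)\,\phi\big(y-(W'_s-W_s)\big)\,\d y
=(\phi\star\phi)(W_s-W'_s)=V(W_s-W'_s),
$$
where the last equality recalls the definition $V=\phi\star\phi$ from \eqref{thursday1} and uses that $V$ is even. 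Hence $K_\eps(W,W')=\int_0^{t}V(W_s-W'_s)\,\d s$ with $t=1/\eps^2$.

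Next I would extract a quantitative lower bound for $V$ near $0$. Since $\phi$ is smooth and compactly supported, $V=\phi\star\phi$ is continuous, and $V(0)=\int_{\R^d}\phi(x)^2\,\d x>0$. By continuity of $V$ at the origin there exists $\delta>0$ such that $V(z)\geq \tfrac{2}{3}V(0)$ for all $z\in\R^d$ with $|z|\leq\delta$; this choice of $\delta$ is exactly the one asserted in the statement, and it depends only on $\phi$ (not on $\eps$).

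Finally I would combine these observations on the event $\{\tau_\delta(W,W')\geq t\}$. By definition of $\tau_\delta$, on this event one has $|W_s-W'_s|<\delta$ for every $s\in[0,t]$, so the integrand satisfies $V(W_s-W'_s)\geq \tfrac{2}{3}V(0)$ for all such $s$. Integrating over $[0,t]$ then yields
$$
K_\eps(W,W')=\int_0^{t}V(W_s-W'_s)\,\d s\;\geq\;\frac{2V(0)}{3}\,t,
$$
which is the claimed inequality. I do not expect a genuine obstacle here: the argument is entirely elementary once the convolution identity is recorded. The only points requiring a word of care are the verification that $V$ is continuous with $V(0)>0$ (both immediate from $\phi$ being a smooth, nonnegative, compactly supported probability density) and the observation that $\delta$ can be fixed independently of $\eps$, which is what makes the bound uniform and hence useful in the subsequent proof of Proposition \ref{prop-UT}.
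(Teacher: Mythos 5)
Your proof is correct and follows essentially the same route as the paper: both reduce $K_\eps(W,W')$ to $\int_0^t V(W_s-W'_s)\,\d s$ and use the continuity of $V=\phi\star\phi$ at the origin to choose $\delta$ so that $V(z)\geq \tfrac23 V(0)$ whenever $|z|\leq\delta$, which the confinement event then guarantees along the whole trajectory. The paper states this more tersely as a uniform bound $t(V(0)-O(\theta))$ over all perturbations $|f(s)|\leq\theta$, but the content is identical.
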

\begin{proof}
Note that $V(0)=\int_{\R^d} \phi^2(y) \d y$. On the other hand,
for $\theta$ small enough,
$$\inf_{f: \,\, \forall s, \,\,|f(s)|\leq \theta}\,\,  \int_0^t \int_{\R^d}
\phi(y) \phi\big (y+f(s)\big ) \d y \d s\geq t\big (V(0)-O(\theta)\big ).
$$
This completes the proof.
\end{proof}
Finally we turn to the proof of Proposition \ref{prop-UT}.

\noindent\textit{Proof of Proposition \ref{prop-UT}:}
Since we will use two independent copies $W,W'$ of Brownian motions,
we write throughout $\Lambda_\eps=\Lambda_\eps(W)$, $\Lambda_\eps(W')$ to 
emphasize which Brownian motion participates in
the definition of $\Lambda_\eps$.

The starting point of the proof
is the remark that by the Cameron-Martin change of measure 
\cite{Bogachev}, the law of $\dot B(x,s)$ under $\Q_\eps$
is the same as the law of $\dot B(x,s)+\beta \phi(x-W_s)$ under 
$\P\otimes P_0$.
In particular, for any measurable $A\subset \mathcal{E}$, the
law under $\overline\Q_\eps$ of $\int_A \Lambda_\eps(W^\prime)\d P_0(W^\prime)$
is the same as the law under $\P\otimes P_0$ of $\int_A \e^{\beta^2 K_\eps(W,W^\prime)}
  \Lambda_\eps(W') \d P_0(W^\prime)$.

  Let $f:\R_+\to \R_+$ be an increasing concave function. 
  Then, by the above remark,
  \begin{eqnarray}
    \label{array-1}
    \int f(Z_\eps) \d\overline\Q_\eps&= & \int f(Z_\eps) \d  \Q_\eps=
    \int f\bigg (\int \Lambda_\eps (W^\prime) \d P_0(W^\prime)\bigg)\; \, 
    \d \Q_\eps\nonumber\\
    &\geq &
    \int f\bigg(\int \Lambda_\eps (W^\prime)\,\, {\1}_{\{\tau(W,W')\geq t\}}\,\,
    \d P_0(W^\prime)\bigg) \d \Q_\eps\nonumber\\
    &= &
    \int f\bigg (\int \Lambda_\eps (W^\prime) \e^{\beta^2 K_\eps(W,W^\prime)}\,\, {\1}_{\{\tau(W,W')\geq t\}}
    \,\,\d P_0(W^\prime)\bigg ) \d (\P\otimes P_0) \nonumber \\
    &\geq &
    \int f\bigg(\int \Lambda_\eps (W^\prime) \e^{2\beta^2 V(0) t/3}\,\,
    {\1}_{\{\tau(W,W')\geq t\}} \,\,
   \d P_0(W^\prime)) \d (\P\otimes P_0) \nonumber \\
    &= &
    \int f\bigg(\e^{2\beta^2 V(0)t/3}\int \Lambda_\eps (W^\prime) \,\,
    {\1}_{\{\tau(W,W')\geq t\}} \,\,
    \d P_0(W')\bigg) \,\,\d (\P\otimes P_0),
  \end{eqnarray}
  where in the first inequality  we used that $f$ is increasing,
  and in the last inequality we used the same together with
  Lemma \ref{lem-comp} (recall $t=\eps^{-2}$). On the other hand, $f$ is concave and on the set $\{\tau \ge t\}$
  the covariance kernel $K_\eps$ is bounded from above by the 
  constant kernel $\widehat K_\eps(W, W^\prime) := V(0) t$.
  Using Kahane's comparison inequality with kernels $K_\eps$ 
  and $\widehat K_\eps$
  (see \cite{Kahane} -- it is stated there for \emph{convex} functions,
  with the opposite sign; see also \cite[Theorem 28]{shamov}), we get:
  \begin{equation}
    \label{eq-Kahane}
   \int f(Z_\eps) \d\overline\Q_\eps\geq 
    E_{G,W} \bigg[f\bigg(\e^{2\beta^2 V(0) t/3}\,\,\big(P_0\otimes P_0\big)
      \big(\tau(W,W')\geq t|W\big)\,\,\e^{\beta (V(0) t)^{1/2} G-
  \beta^2 V(0) t / 2}\bigg)\bigg],
\end{equation}
where $G$ is a standard centered Gaussian random variable which is independent of $W$,
and the expectation $E_{G,W}$ is taken over both $G$ and $W$. In particular,
  \begin{equation}
    \label{eq-Kahane-1}
    \begin{aligned}
   \int f(Z_\eps) \d\overline\Q_\eps &\geq 
   E_{G,W} \bigg[f\bigg(\e^{\beta^2 V(0)t/6}\,\,\big(P_0\otimes P_0\big)
     \big(\tau(W,W')>t|W\big)\e^{\beta (V(0)t)^{1/2} G}\bigg)\bigg] \\
   \\
   &\geq 
   E_{G,W}\bigg[f\bigg(\chi(W) e^{-\kappa t}
   \e^{\beta^2 V(0)t/6}e^{\beta \sqrt{V(0)t} G}\bigg)\bigg].
   \end{aligned}
 \end{equation}
 Note that the argument of $f$ goes to infinity  as $t\to\infty$ for
 almost every $(G,W)$, if $\beta>\sqrt{6\kappa}$. Using 
 $$
f(x)=f_\alpha(x)= \left\{\begin{array}{ll}
  \alpha^{-1} x, & x\leq \alpha\\
  1,& x\geq \alpha,
\end{array}\right.$$
we conclude that
$$ \lim_{\alpha\to\infty} \liminf_{\eps\to 0}
\int f_\alpha(Z_\eps) \d\overline\Q_\eps =1.$$
This completes the proof.\qed

\noindent\textit{Proof of Corollary \ref{cor-beta}.}
Recall the random variable 
$$Z_\eps=Z_{\eps,\beta}(B)= E_0\bigg[
\exp\bigg\{\beta \int_0^{\eps^{-2}} \int_{\R^d} \phi\big({y-W_s}\big)\dot B(s, \d y)
\d s - \frac {\beta^2}{2\eps^2} V(0)\bigg\}\bigg].
$$
Let 
$$
\overline \beta=\sup\bigg\{\beta>0: \big\{Z_{\eps,\beta}\big\}_{\eps>0}
\;	\mbox{\rm is uniformly integrable}\bigg\}.
$$
In view of Theorem \ref{thm1} and Theorem \ref{theo-UT}, we have
$\overline\beta\in (0,\infty)$.
Thus, the corollary will follow from the following fact.
\begin{align}
	\label{eq-cor1}
	&\mbox{\rm	If $Z_{\eps,\beta}$ is uniformly 
	integrable for some $\beta>0$,
	then
	so is $Z_{\eps,\beta'}$ for $\beta'<\beta$.}
\end{align}

To see \eqref{eq-cor1}, let $B,B'$ be independent copies of $B$ and
let $\beta'=\rho \beta$ with $\rho<1$. To emphasize the dependence of
$Z_{\eps,\beta}$ on $B$, 
we write $Z_{\eps,\beta}=Z_{\eps,\beta}(B)$.
Note  that
$$
Z_{\eps,\beta^\prime}(B)=Z_{\eps,\rho \beta}(B)=
\E\left[Z_{\eps,\beta}(\rho B+\sqrt{1-\rho^2} B')\left|B\right.\right]
$$
Since $\big\{Z_{\eps,\beta}(B)\}_{\eps>0}$ is uniformly integrable,
there exists a positive increasing convex function $f$ with 
$f(x)/x\to_{x\to\infty} \infty$ so that $\sup_\eps
\E f(Z_{\eps,\beta}(B))<\infty$. 
However, by Jensen's inequality and the last display,
$$
\begin{aligned}
\E [f(Z_{\eps,\beta'}(B))]&=
\E \bigg[f\left(\E\left(Z_{\eps,\beta}(\rho B+
\sqrt{1-\rho^2} B')\left|\right.B\right)
\right)\bigg]\\
&\leq  \E \big[f(Z_{\eps,\beta}(\rho B+\sqrt{1-\rho^2} B'))\big]
=
\E [f(Z_{\eps,\beta}(B))]\,.
\end{aligned}
$$
It follows that $\sup_{\eps>0} \E [f(Z_{\eps,\beta'}(B))]<\infty$, which
in turn implies the uniform integrability of $\big\{Z_{\eps,\beta'}\big\}_{\eps>0}$. This completes
the proof.
\qed

{\bf{Acknowledgments.}} The first author would like to thank Herbert Spohn (Munich) for suggesting a problem that led to this study, 
and for inspiring discussions.


\begin{thebibliography}{WWW98}





		\bibitem[AT07]{AT07}
{\sc R. J. Adler} and {\sc J. E. Taylor},
\newblock{Random fields and geometry.}
\newblock{Springer Monographs in Mathematics.}
New York, 2007

\smallskip

\bibitem[AKQ14]{AKQ}
{\sc T. Alberts}, {\sc K. Khanin} and {\sc J. Quastel},
\newblock{The continuum directed random polymer.}
\newblock{J. Stat. Phys.}
{\bf 154}, 305-326, 2014



\smallskip

\bibitem[ACQ11]{ACQ11}
{\sc G. Amir}, {\sc I. Corwin} and {\sc J. Quastel},
\newblock{Probability distribution of the free energy of the continuum
directed random polymer in 1+1 dimensions.}
\newblock {Communications on pure and applied mathematics}  {\bf 64},
Issue 4, 466-537, 2011 

\smallskip

\bibitem[B89]{B89}
{\sc E. Bolthausen},
\newblock{A note on the diffusion of directed polymers in a random environment.}
\newblock{Commun. Math. Phys.} {\bf 123}, 529-534 (1989)

\smallskip

\bibitem[BC95]{BC95}
{\sc L. Bertini} and {\sc N. Cancrini},
\newblock{The stochastic heat equation: Feynman-Kac formula and intermittency.} 
\newblock{Journal of statistical Physics} {\bf 78}, Nos. 5/6, 1995

\smallskip

\bibitem[BC98]{BC98}
{\sc L. Bertini} and {\sc N. Cancrini},
\newblock{The two-dimensional stochastic heat equation: renormalizing a multiplicative noise.}
\newblock{J. Phys. A: Math. Gen.} {\bf 31} 615, 1998

\smallskip


\bibitem[BG97]{BG97}
{\sc L. Bertini} and {G. Giacomin},
\newblock{Stochastic Burgers and KPZ equations from particle systems.}
\newblock{Communications in Mathematical Physics} {\bf 183}, Issue 3, 
(1997), 571-607

\smallskip


\bibitem[Bo98]{Bogachev}
{\sc V.I. Bogachev},
\newblock{ Gaussian measures.}
\newblock{Mathematical Surveys and Monographs} {\bf 62} (1998), American 
Mathematical Society, Providence, RI.


\smallskip

\bibitem[CSY04]{CSY04}
  {\sc F. Comets}, {\sc T. Shiga} and {\sc N. Yoshida},
\newblock{Probabilistic analysis of directed polymers in a random environment: a review.}
\newblock{Stochastic analysis on large scale interacting systems},  
{Adv. Stud. Pure Math.} {\bf 39}, 115--142, 2004

\smallskip

\bibitem[CY06]{CY06}
  {\sc F. Comets} and {\sc N. Yoshida},
\newblock{Directed polymers in random environment are diffusive at weak 
disorder.}
\newblock{Annals Probab.} {\bf 34}, 1746--1770, 2006.

\smallskip

\bibitem[CY13]{CY13}
  {\sc F. Comets} and {\sc N. Yoshida},
\newblock{Localization transition for polymers in 
Poissonian medium.}
\newblock{Comm. Math. Phys.} {\bf 324}, 417-447, 2013.

\smallskip

\bibitem[CSZ15]{CSZ15}
{\sc F. Caravenna}, {\sc R. Sun} and {\sc N. Zygouras},
\newblock{Universality of marginally relevant disorder systems.}
\newblock {arXiv:1510.06287} (2015).


\smallskip

\bibitem[F15]{Feng}
  {\sc Z. C. Feng},
  \newblock{Rescaled Directed Random Polymer in Random Environment in
  Dimension 1 +2.}
  \newblock{PhD Thesis, University of Toronto} (2015). 

\smallskip

\bibitem[H13]{H13}
{\sc M. Hairer},
\newblock{Solving the KPZ equation.}
\newblock{Annals of Mathematics} {\bf 178}, 559-664, (2013)


\smallskip

\bibitem[IS88]{IS88}
  {\sc J. Z. Imbrie} and {\sc T. Spencer},
\newblock{Diffusion of directed polymers in a random environment.}
\newblock{Journal of statistical Physics} {\bf 52}, nos. 3/4, 1988 

\smallskip



\bibitem[K85]{Kahane}
{\sc J.-P. Kahane},
\newblock{ Sur le chaos multiplicatif.}
\newblock{\it Ann. sc. math. Quebec} {\bf 9}, no. 2, 105-150 (1985).

\smallskip

\bibitem[K87]{KaCh}
{\sc J.-P. Kahane},
\newblock{Positive martingales and random measures.}
\newblock{\it Chin. Ann. of Math} {\bf 8B}, no. 1, 1-12 (1987).

\smallskip

\bibitem[P76]{P76}
{\sc N.I.~Portenko},
\newblock{ Diffusion processes with unbounded drift coefficient.}
\newblock{\it  Theoret.~Probability Appl.}  {\bf 20}, 27-31 (1976).

\smallskip



\bibitem[S14]{shamov}
  {\sc A. Shamov},
  \newblock{ On Gaussian multiplicative chaos.}
  \newblock {arXiv:1407.4418} (2014).

\end{thebibliography}
\end{document}